\newtheorem{definition}{Definition}[section]
\newtheorem{theorem}[definition]{Theorem}
\newtheorem{proposition}[definition]{Proposition}
\newtheorem{lemma}[definition]{Lemma}
\newtheorem{remark}[definition]{Remark}
\newtheorem{assumption}{Assumption}
\newcommand{\R}{\mathbb{R}}
\newcommand{\N}{\mathbb{N}}
\newcommand{\di}{\mathrm{d}}
\newenvironment{nouppercase}{%
  \renewcommand{\uppercasenonmath}[1]{}}{}
\title{{M}c{K}ean {SDE}s with singular coefficients}
\author{Elena Issoglio and Francesco Russo}
\date{26th June 2022}
\address[Elena Issoglio]{Dipartimento di Matematica `G.\ Peano', Universit\'a di Torino}
\email[Corresponding author]{elena.issoglio@unito.it}
\address[Francesco Russo]{Unit\'e de Math\'{e}matiques appliqu\'{e}es, ENSTA Paris, Institut Polytechnique de Paris}
\email{francesco.russo@ensta-paris.fr}
\begin{document}

\begin{abstract}
The paper investigates existence and uniqueness for a
stochastic differential equation (SDE) depending on the law density  of the solution, involving a Schwartz distribution. Those equations, known
 as McKean SDEs, are interpreted in the sense of a suitable singular martingale
problem. A key tool used in the investigation  is the study of the corresponding Fokker-Planck equation. 
\end{abstract}

\begin{nouppercase}
\maketitle
\end{nouppercase}

{\bf Key words and phrases.} Stochastic differential equations;
distributional drift; McKean; Martingale problem.

{\bf 2020 MSC}. 60H10; 60H30; 35C99; 35D99; 35K10.

\section{Introduction}
In this paper we are concerned with the study of  {\em singular McKean SDEs} of the form 
\begin{equation}\label{eq:McKean}
\left\{
\begin{array}{l}
X_t = X_0 + \int_0^t F(v(s, X_s))b(s, X_s)  \di s + W_t
\vspace{5pt}\\
v(t, \cdot) \text{ is the law density of }X_t,
\end{array}
\right.
\end{equation}
for some given initial condition $X_0$ with density  $v_0$. The terminology \emph{Mc\-Kean} refers to the fact that the coefficient of the SDE depends on the law of the solution process itself, while {\em singular} reflects the fact that one of the coefficients is a 
 Schwartz distribution. 
 The main aim of this paper is to solve the singular McKean problem \eqref{eq:McKean}, that is, to define rigorously the meaning of equation \eqref{eq:McKean} and to find a (unique) solution to the equation. The key novelty is the  {\em Schwartz distributional nature} of the drift, which is encoded in the term $b$. 

 The problem is $d$-dimensional, in particular the process $X$
 takes values in $\R^d$, the function $F$ is $F:\R\to \R^{d\times n}$, the  term $b$ is formally
 $b:[0,T]\times\R^d \to \R^n$ and $W$ is a $d$-dimensional Brownian motion, where $n,d$ are two integers. 
We assume that $b(t, \cdot)\in \mathcal C^{(-\beta)+}(\R^n)$ for some $0<\beta<1/2$ (see below for the definition of Besov spaces $\mathcal C^{-\beta}(\R^n)$), which means that $b(t, \cdot)$ is a Schwartz distribution and thus  the term $b(t, X_t)$, as well as its product with $F$, are only formal at this stage. The function $F$ is nonlinear.

\vspace{5pt}

The term $(s,x,v) \mapsto F(v(s,x)) b(s,x)$ in equation \eqref{eq:McKean} is a special case of a
general drift $(s,x,v) \mapsto f(s,x,v)$. When $f$ is a  function, equation \eqref{eq:McKean}
was studied by several authors.
For example \cite{JourMeleard} studies existence and uniqueness of the solution under several regularity assumptions on the drift, while \cite{lieber-oudjane-russo}  requires $f$ to be  Lipschitz-continuous with respect to the variable $v$, uniformly in time and space, and measurable with respect to time and space. We also mention \cite{BarbuRockSIAM}, where the authors obtain existence of the solution when assuming  that the drift is a measurable function. For other past contributions see  \cite{papierUgolini}.

Different settings of McKean-Vlasov problems   have been considered by other authors where the pointwise dependence on the density is replaced by a smoother dependence on the law, typically of Wasserstein type, and the Lipschitz property for the coefficients has been relaxed. From this perspective, the equations are not singular in our sense.
For example in \cite{deRaynal} the author considers McKean-Vlasov equations with coefficients $b$ and $\sigma$ which depend on the law of the process in a relatively smooth way, but are  H\"older-continuous in time and space. Later on in  \cite{HuangWangSPA} the authors considered SDEs where both the drift and the diffusion coefficient are of McKean type, with a Wasserstein dependence on the law, and where the drift satisfies a Krylov-R\"ockner $L^p$-$L^q$-type dependence. 
Independently   \cite{RocknerZhang} considered in particular SDEs with a McKean drift of the type $t\mapsto \int_{\mathbb R^d}b(X_t, y) \mu_{X_t}(dy)$ where $\mu_{X_t}$ is the law of $X_t$, and $b$ is some measurable function and $\sigma =\sqrt2$. 
 In \cite{HuangWang2021}, the authors study McKean-Vlasov SDEs with drift discontinuous under Wasserstein distance.

In the literature we also find  some contributions on \eqref{eq:McKean} with $F\equiv 1$, i.e.\ when there is no dependence on the law $v$ but the drift $b$ is a Schwartz distribution. In this case  equation \eqref{eq:McKean} becomes an SDE with singular drift. Ordinary SDEs with distributional drift were investigated
by several authors, starting from \cite{frw1, frw2, bass_chen, russo_trutnau07}
in the one-dimensional case.
In the multi-dimensional case it was studied by \cite{flandoli_et.al14} with $b$ being a Schwartz distribution living in a fractional Sobolev space of negative order (up to $-\tfrac12)$. 
Afterwards, \cite{cannizzaro} extended the study to a smaller negative order (up to $-\tfrac23$) and formulated the problem as a martingale problem. We also mention \cite{issoglio_russoMP}, where the singular SDE is studied
as a martingale problem, with the same setting as in the present paper (in particular the drift belongs to a negative Besov space rather than a fractional Sobolev space). 
Backwards SDEs with  similar singular coefficients have also been studied, see \cite{issoglio_jing16, issoglio_russo20}.

The main analytical tool in the works cited above is  an associated singular PDE  (either Kolmogorov or Fokker-Planck). In the McKean case,
the relevant PDE associated to equation \eqref{eq:McKean} is the nonlinear Fokker-Planck equation
\begin{equation}\label{eq:FPpde}
\left\{
\begin{array}{l}
\partial_t v = \frac12 \Delta v -\text{div}(\tilde F( v)b)\\
v(0) = v_0,
\end{array}
\right.
\end{equation}
where
\begin{equation} \label{Ftilde}
  \tilde F(v) := v F(v).
\end{equation}
  PDEs with similar (ir)regular coefficients were stu\-died in the past, see for example \cite{flandoli_et.al14, issoglio13} for the study of singular Kolmogorov equations. 
One can then use results on existence, uniqueness and continuity of the solution to the PDE (e.g.\ with respect to the initial condition and the coefficients) to infer results about the stochastic equation. For example in \cite{flandoli_et.al14}, the authors use the singular Kolmogorov PDE to define the meaning of the solution to the SDE and find a unique solution.

Let us remark that the PDEs mentioned above are a classical tool in the study of McKean equations when the dependence on the law density of the process is pointwise, which is the case in the present paper where we have $F(v(t,x))$. There is, however, a large body of literature that studies Mc\-Kean equations where the drift depends on the law more regularly, typically it is assumed to be Lipschitz-continuous with respect to the Wasserstein metric.
In this case the McKean equation is treated with different techniques than the ones explained above, in particular it is treated with probabilistic tools.
This is nowadays a well-known approach, for more details see for example the recent books by Carmona and Delarue \cite{carmona-delarueI, carmona-delarueII},
see also \cite{LOR1, LOR2}.

\vspace{5pt}

Our contribution to the literature is twofold. The first and main novel result concerns the notion of solution to the singular McKean equation \eqref{eq:McKean} (introduced in Definition \ref{def:solMcK}) and 
 its existence and uniqueness (proved in  Theorem \ref{thm:McKsol}).  
The second contribution is the study of the singular Fokker-Plank equation \eqref{eq:FPpde}, in particular we find a unique solution $v\in C([0,T]; C^{\beta+})$ in the sense of Schwartz distributions, see Theorem \ref{thm:esistunic} for  existence and uniqueness.

\vspace{5pt}
The paper is organised as follows. In Section \ref{sc:prelim} we introduce the notation and recall some useful results on semigroups and Besov spaces. We also recall briefly some results on the singular martingale problem. In  Section \ref{sc:FPpde} we study the singular Fokker-Planck PDE \eqref{eq:FPpde}.  Then we consider a mollified version of the PDE and the SDE in Sections  \ref{sc:regularisedPDE} and \ref{sc:regularisedSDE}, respectively. Finally in Section \ref{sc:McKean} we use the mollified PDEs and SDEs and their limits to study  \eqref{eq:McKean} and we prove our main theorem of existence and uniqueness of  a solution to \eqref{eq:McKean}. 
In Appendix \ref{app:frGronwall} we recall a useful fractional Gronwall's inequality. 
In Appendix \ref{app:inductive} we show a characterization of  continuity  and compactness in inductive spaces.

\section{Setting and useful results}\label{sc:prelim}
\subsection{Notation and definitions}
Let us use the notation $C^{0,1}:=C^{0,1}([0,T]\times \R^d)$ to indicate the space of jointly continuous functions with gradient in $x$ uniformly continuous in $(t,x)$.  By a slight abuse of notation we use the same notation
$C^{0,1}$  for functions which are $\R^d$-valued. 
When $f:\R^d \to \R^d$ is differentiable, we denote by $\nabla f$ the matrix given by $(\nabla f)_{i,j} = \partial_i f_j$. When $f: \R^d \to \R$ we denote the Hessian matrix of $f$ by Hess$(f)$.

We denote by $\mathcal S=\mathcal S(\mathbb R^d )$  the space of Schwartz functions on $\mathbb R^d$ and by $\mathcal S'=\mathcal S'(\mathbb R^d )$ the space of Schwartz distributions. 
 For $\gamma\in\mathbb R$ we denote by  $\mathcal C^\gamma = \mathcal C^\gamma(\mathbb R^d)$  the Besov space or H\"older-Zygmund space and by $\|\cdot\|_\gamma$ its norm, more precisely
\begin{equation*}
\mathcal C^\gamma := \left\{ f\in \mathcal S' : \| f\|_\gamma := \sup_{j\in\mathbb N} 2^{j\gamma} \|\mathcal F^{-1}(\varphi_j \mathcal F f)\|_\infty \right\},
\end{equation*} 
 where $\varphi_j$ is a partition of unity and $\mathcal F$ denotes the Fourier transform. For more details see for example \cite[Section 2.7]{bahouri}.
 We recall that for $\gamma'<\gamma$ one has $\mathcal C^\gamma\subset\mathcal C^{\gamma'}$. 
If $\gamma \in \R^+ \setminus \mathbb N$ then the space coincides with the classical H\"older space of functions which are $ \left \lfloor{\gamma}\right \rfloor $-times differentiable and such that the $ \left \lfloor{\gamma}\right \rfloor $th derivative is $ (\gamma - \left \lfloor{\gamma}\right \rfloor  )$-H\"older continuous. For example if $\gamma\in(0,1)$  the classical $\gamma$-H\"older norm 
\begin{equation}\label{eq:holder}
 \|f\|_{\infty} + \sup_{x\neq y, |x-y|<1} \frac{|f(x)-f(y)|}{|x-y|^\gamma},
\end{equation}
is an equivalent norm in $\mathcal C^\gamma$.  With an abuse of notation we use $\|f\|_{\gamma} $ to denote \eqref{eq:holder}. 
For this and for more details see, for example, \cite[Chapter 1]{triebel10} or \cite[Section 2.7]{bahouri}. 
Notice that we use the same notation $\mathcal C^\gamma$ to indicate  $\R$-valued functions but also $\R^d$ or $\R^{d\times d}$-valued  functions.
It will be clear from the context which space is needed.

We denote by $C_T \mathcal C^\gamma$ the space of continuous functions on $[0,T]$ taking values in $\mathcal C^\gamma$, that is $C_T \mathcal C^\gamma:= C([0,T]; \mathcal C^\gamma)$. 
For any given $\gamma\in \R$ we denote by $\mathcal C^{\gamma+}$ and $\mathcal C^{\gamma-}$  the spaces given by
\[
\mathcal C^{\gamma+}:= \cup_{\alpha >\gamma} \mathcal C^{\alpha} ,  \qquad  
\mathcal C^{\gamma-}:= \cap_{\alpha <\gamma} \mathcal C^{\alpha}.
\]
Notice that $\mathcal C^{\gamma+}$ is an inductive space. 
We will also use the spaces $C_T C^{\gamma+}:=C([0,T]; \mathcal C^{\gamma+})$, recalling that  $f\in C_T C^{\gamma+} $ if and only if there exists $\alpha>\gamma $ such that $f\in C_T C^{\alpha}$, see Lemma \ref{lm:inductive} in  Appendix \ref{app:inductive} for a proof of the latter fact.

Similarly, we use the metric space   $C_T C^{\gamma-}:=C([0,T]; \mathcal C^{\gamma-})$, meaning that  $f\in C_T C^{\gamma-} $ if and only if  for any $\alpha<\gamma $ we have $f\in C_T C^{\alpha}$.
Notice that if $f$ is continuous and such that $\nabla f \in C_T \mathcal C^{0+}$ then $f\in C^{0,1}$.

Let $(P_t)_t$ denote the semigroup generated by $\tfrac12\Delta$ on $\mathcal S$, in particular for all $\phi\in\mathcal S$ we define $(P_t \phi) (x):= \int_{\mathbb R^d} p_t(x-y) \phi(y)\mathrm dy$, where the kernel $p$ is the usual heat kernel 
\begin{equation}\label{eq:pt}
p_t(z) =  \frac{1}{(2\pi t)^{d/2}} \exp\{ -\frac{|z|^2}{t}\}.
\end{equation}    It is easy to see that $P_t: \mathcal S \to \mathcal S$. 
Moreover we can extend it to $\mathcal S'$  by dual pairing (and we denote it with the same notation for simplicity). 
One has $\langle P_t \psi, \phi\rangle = \langle \psi,  P_t \phi\rangle $ for each $\phi\in\mathcal S$ and  $\psi\in\mathcal S'$, using the fact that the kernel is symmetric.

\begin{lemma}\label{lm:heat}
Let $g:[0,T] \to \mathcal S'(\R^d)$ be continuous and $w_0\in \mathcal S'(\R^d)$. The unique (weak) solution of 
\[
\left\{
\begin{array}{l}
\partial_t w = \frac12 \Delta w +g\\
w(0) = w_0
\end{array}
\right.
\] 
is given by  
\begin{equation}\label{eq:heat}
P_t w_0 + \int_0^t P_{t-s} g(s) \di s, \quad t\in[0,T].
\end{equation}
By weak solution we mean, for every $\varphi \in \mathcal S(\R^d)$ and $t\in[0,T]$ we have $ \langle w(t), \varphi\rangle = \langle w_0, \varphi\rangle + \int_0^t \langle w(s),  \frac12 \Delta \varphi\rangle  \di s +  \int_0^t \langle  g(s),  \varphi\rangle  \di s$.
\end{lemma}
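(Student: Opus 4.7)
The proof splits cleanly into existence (verify the Duhamel formula \eqref{eq:heat} defines a weak solution) and uniqueness (an adjoint/duality argument against the backward heat flow). Throughout I interpret the $\mathcal{S}'$-valued integrals by dual pairing: $\langle \int_0^t P_{t-s} g(s)\,\di s,\varphi\rangle := \int_0^t \langle g(s), P_{t-s}\varphi\rangle\,\di s$, which is well-defined since $g$ is continuous into $\mathcal S'$, $P_{t-s}\varphi\in\mathcal S$, and the pairing is continuous in $s$. The same remark yields that $t\mapsto w(t)$ defined by \eqref{eq:heat} is continuous as a map $[0,T]\to\mathcal S'$, using continuity of $s\mapsto P_s\varphi$ in $\mathcal S$ for each $\varphi$.

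For existence, I would fix $\varphi\in\mathcal S$, plug \eqref{eq:heat} into the right-hand side of the weak formulation, and verify the identity by Fubini and the fact that $\tfrac{\di}{\di r}P_r\varphi = \tfrac12\Delta P_r\varphi$ in $\mathcal S$. Concretely,
\begin{align*}
\int_0^t \!\langle w(s),\tfrac12\Delta\varphi\rangle\,\di s
&= \int_0^t \!\langle w_0, \tfrac12\Delta P_s\varphi\rangle\,\di s + \int_0^t\!\!\int_0^s \!\langle g(r), \tfrac12\Delta P_{s-r}\varphi\rangle\,\di r\,\di s \\
&= \langle w_0, P_t\varphi-\varphi\rangle + \int_0^t \langle g(r), P_{t-r}\varphi-\varphi\rangle\,\di r \\
&= \langle w(t),\varphi\rangle - \langle w_0,\varphi\rangle - \int_0^t \langle g(r),\varphi\rangle\,\di r,
\end{align*}
which is exactly the required weak identity.

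For uniqueness, let $\bar w := w_1-w_2\in C([0,T];\mathcal S')$ be a difference of two weak solutions; it satisfies the weak formulation with $g\equiv 0$ and $\bar w(0)=0$. Fix $t\in(0,T]$ and $\varphi\in\mathcal S$, and set $\psi(s):=P_{t-s}\varphi\in\mathcal S$ for $s\in[0,t]$, so that $\partial_s\psi(s)=-\tfrac12\Delta\psi(s)$ in $\mathcal S$. I would show that $s\mapsto \langle\bar w(s),\psi(s)\rangle$ is constant on $[0,t]$ by telescoping along a partition $0=s_0<\dots<s_n=t$:
\begin{equation*}
\langle\bar w(t),\varphi\rangle = \sum_{i=0}^{n-1}\bigl[\langle\bar w(s_{i+1})-\bar w(s_i),\psi(s_{i+1})\rangle + \langle\bar w(s_i),\psi(s_{i+1})-\psi(s_i)\rangle\bigr].
\end{equation*}
Applying the weak formulation to the first summand and the definition of $\psi$ to the second, the two contributions cancel to leading order as the mesh shrinks, leaving a vanishing remainder by uniform continuity on $[0,t]$ of $r\mapsto\tfrac12\Delta\psi(r)$ in $\mathcal S$ and of $r\mapsto\bar w(r)$ in $\mathcal S'$. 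Hence $\langle\bar w(t),\varphi\rangle = \langle\bar w(0),\psi(0)\rangle = 0$ for every $\varphi$, giving $w_1=w_2$.

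\textbf{Main obstacle.} The only technical subtlety is the passage from the weak formulation, stated with a single $\mathcal S$-test function, to an identity involving the time-dependent test function $\psi(s)=P_{t-s}\varphi$; this is handled by the telescoping/approximation argument above, relying crucially on the joint continuity properties of the heat semigroup on $\mathcal S$ and of $\bar w$ on $\mathcal S'$. Aside from this, both existence and uniqueness are routine once the $\mathcal S'$-valued integrals are correctly interpreted by duality.
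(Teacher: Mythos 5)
Your existence argument is the same as the paper's (the paper dismisses it as ``by inspection''; you write out the Fubini/Duhamel computation, which is exactly what is meant). For uniqueness, however, the paper takes a Fourier-transform route: applying $\mathcal F$ in $\mathcal S'$ turns $\partial_t \bar w = \tfrac12\Delta\bar w$, $\bar w(0)=0$, into $\partial_t\widehat{\bar w}=-\tfrac{|\xi|^2}{2}\widehat{\bar w}$, $\widehat{\bar w}(0)=0$, and one then argues that this forces $\widehat{\bar w}\equiv 0$. You instead run a duality argument directly in physical space, testing $\bar w$ against the backward heat flow $\psi(s)=P_{t-s}\varphi$ and making the formal cancellation rigorous via a Riemann-sum telescoping. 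Both approaches are valid. Your argument is more elementary in that it never leaves the framework of the weak formulation and the heat semigroup on $\mathcal S$; the cost is the nontrivial (if standard) passage from fixed to time-dependent test functions, which you correctly flag and handle via partitioning plus the equicontinuity furnished by compactness of $\{\bar w(s):s\in[0,t]\}\subset\mathcal S'$ and of $\{\tfrac12\Delta P_{t-s}\varphi : s\in[0,t]\}\subset\mathcal S$. The paper's Fourier route avoids the telescoping entirely and is shorter once one accepts uniqueness of the Fourier-side ODE in $\mathcal S'$, though that last step (since $e^{|\xi|^2 t/2}$ is not a tempered multiplier) itself secretly requires a duality argument of roughly the kind you give, just transferred to frequency space. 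In short: same existence proof, genuinely different but correct uniqueness proof, with yours being more self-contained and the paper's more compact.
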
 
 \begin{proof}
 The fact that \eqref{eq:heat} is a solution is done by inspection. The uniqueness is a consequence of Fourier transform. 
  \end{proof}

We denote by $\Gamma$ the usual Gamma function defined as  $\Gamma(\theta) = \int_0^\infty t^{\theta-1} e^{-t} \di t$ for $\theta>0$.

In the whole article  the letter $c$ or $C$ will denote a generic constant which may change from line to line.
 
\subsection{Some useful results} In the sections below, we are interested in the action of $P_t$ on elements of Besov spaces $\mathcal C^\gamma$. These estimates are known as \emph{Schauder's estimates} (for a proof we refer to \cite[Lemma 2.5]{catellier_chouk}, see also \cite{gubinelli_imkeller_perkowski}
for similar results).
 \begin{lemma}[Schauder's estimates]\label{lm:schauder}
Let $f\in \mathcal C^\gamma $ for some $\gamma \in \mathbb R$. Then for any $\theta\geq 0$ there exists a constant $c$ such that
\begin{equation}\label{eq:Pt}
\|P_t f\|_{\gamma + 2 \theta} \leq c t^{-\theta} \|f\|_\gamma,
\end{equation}  
for all $t>0$.

Moreover let $\theta\in(0,1)$. 
For $f\in \mathcal C^{\gamma + 2\theta }$  we have 
\begin{equation}\label{eq:Pt-I}
\|P_t f-f\|_{\gamma} \leq c t^{\theta} \|f\|_{\gamma+2\theta }.
\end{equation}
\end{lemma}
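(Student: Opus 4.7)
The plan is to prove both estimates using the Littlewood-Paley (dyadic) characterization of the Besov norm, exploiting the fact that the heat semigroup is a Fourier multiplier with symbol $e^{-t|\xi|^2/2}$ that commutes with every dyadic block $\Delta_j f := \mathcal F^{-1}(\varphi_j\mathcal F f)$. Thus for every $j$ we have $\Delta_j P_t f = P_t \Delta_j f$, and by definition of the norm it suffices to obtain pointwise (in $j$) estimates of $\|P_t\Delta_j f\|_\infty$ and $\|\Delta_j(P_t f-f)\|_\infty$, then multiply by the appropriate factor $2^{j\eta}$ and take the supremum in $j$.

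For \eqref{eq:Pt} the core is a Bernstein-type bound: since $\varphi_j$ is supported in an annulus $\{|\xi|\sim 2^j\}$ for $j\ge 0$, pick an auxiliary $\tilde\varphi_j$ with $\tilde\varphi_j\varphi_j=\varphi_j$ and supported in a slightly larger annulus, so that
\[
P_t\Delta_j f \;=\; K_{t,j}\ast\Delta_j f, \qquad K_{t,j} := \mathcal F^{-1}\!\bigl(e^{-t|\cdot|^2/2}\tilde\varphi_j\bigr).
\]
Rescaling $\xi\mapsto 2^j\xi$ shows $\|K_{t,j}\|_{L^1}=\|\mathcal F^{-1}(e^{-t2^{2j}|\cdot|^2/2}\tilde\varphi_0)\|_{L^1}$, and since on $\mathrm{supp}\,\tilde\varphi_0$ the symbol and all its derivatives are bounded by $C e^{-c\,t2^{2j}}$, one obtains $\|K_{t,j}\|_{L^1}\le Ce^{-c\,t2^{2j}}$. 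Young's inequality then gives $\|P_t\Delta_j f\|_\infty\le Ce^{-c\,t2^{2j}}\|\Delta_j f\|_\infty$. For the zero-th block $j=-1$ one uses simply $\|P_t\Delta_{-1}f\|_\infty\le \|\Delta_{-1}f\|_\infty$ by contractivity of $P_t$ on $L^\infty$. Multiplying by $2^{j(\gamma+2\theta)}$ we get
\[
2^{j(\gamma+2\theta)}\|P_t\Delta_j f\|_\infty \le C\,t^{-\theta}\bigl(t2^{2j}\bigr)^{\theta} e^{-c\,t2^{2j}}\,2^{j\gamma}\|\Delta_j f\|_\infty \le C\,t^{-\theta}\|f\|_\gamma,
\]
using that $x\mapsto x^\theta e^{-cx}$ is bounded on $[0,\infty)$; taking the sup in $j$ yields \eqref{eq:Pt}.

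For \eqref{eq:Pt-I} the symbol of $P_t-I$ is $m_t(\xi):=e^{-t|\xi|^2/2}-1$. The elementary inequality $|e^{-x}-1|\le C\,x^{\theta}$ for $x\ge 0$ and $\theta\in(0,1]$ (obtained from $|e^{-x}-1|\le\min(x,1)$) yields $|m_t(\xi)|\le C(t|\xi|^2)^{\theta}$. Repeating the kernel argument above for the multiplier $m_t(\xi)\tilde\varphi_j(\xi)$ and again rescaling $\xi\mapsto 2^j\xi$ gives
\[
\|\Delta_j(P_t f - f)\|_\infty \;\le\; C\bigl(t\,2^{2j}\bigr)^{\theta}\|\Delta_j f\|_\infty,
\]
after checking that all relevant derivatives of $m_t(2^j\cdot)\tilde\varphi_0$ are uniformly controlled in $j$ and $t$, which is routine once one writes $m_t(2^j\xi)=(t2^{2j}|\xi|^2/2)^{\theta}\psi(t2^{2j}|\xi|^2)$ with $\psi(y)=y^{-\theta}(e^{-y/2}-1)$ smooth and bounded. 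Multiplying by $2^{j\gamma}$ and taking the sup gives \eqref{eq:Pt-I}.

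The main technical point, and the only place where one does genuine work, is justifying the Bernstein-type bound $\|K_{t,j}\|_{L^1}\le Ce^{-c\,t2^{2j}}$: one needs uniform-in-$(t,j)$ control of a suitable seminorm of $e^{-t2^{2j}|\xi|^2/2}\tilde\varphi_0$ (say a Schwartz seminorm of order $d+1$) to conclude via the standard Fourier-inversion bound $\|\mathcal F^{-1}g\|_{L^1}\lesssim \|(1+|x|^{d+1})\mathcal F^{-1}g\|_\infty$. This reduces to bounding a fixed number of derivatives of the symbol, which all carry a polynomial prefactor in $t2^{2j}$ absorbed by the exponential decay. Everything else is bookkeeping with the Littlewood-Paley norm. (These estimates are classical; as the authors note, a self-contained proof can be found in \cite[Lemma 2.5]{catellier_chouk}.)
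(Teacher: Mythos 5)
The paper does not give a proof of this lemma; it is stated as known and the reader is referred to \cite[Lemma 2.5]{catellier_chouk} and to \cite{gubinelli_imkeller_perkowski}. Your proposal supplies the standard self-contained Littlewood--Paley argument, which is indeed the argument behind the cited references: commute the multiplier with the dyadic blocks, bound the $L^1$ norm of the localized kernel by rescaling the frequency variable, and use Young's inequality block by block. The core bound $\|K_{t,j}\|_{L^1}\le Ce^{-ct2^{2j}}$ for $j\ge 0$ is correct, and absorbing $(t2^{2j})^\theta$ into the exponential gives \eqref{eq:Pt}; the second estimate follows the same template with the symbol $e^{-t|\xi|^2/2}-1$.

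Two small points are worth flagging. First, for the block $j=-1$ contractivity alone gives a $t$-independent bound, so the factor $t^{-\theta}$ on the right of \eqref{eq:Pt} is only recovered for $t$ in a bounded interval, i.e.\ the constant $c$ depends on $T$ (or one restricts to $t\le T$); this is implicit in the paper's usage and in the cited references, but your unified display $\le Ct^{-\theta}\|f\|_\gamma$ glosses over it for $j=-1$. Second, in \eqref{eq:Pt-I} the factorization $m_t(2^j\xi)=(t2^{2j}|\xi|^2/2)^\theta\psi(t2^{2j}|\xi|^2)$ with $\psi(y)=y^{-\theta}(e^{-y/2}-1)$ is not quite as clean as stated: $\psi$ is bounded but not smooth at $y=0$ (its derivative behaves like $y^{-\theta}$ near the origin), so one cannot simply cite smoothness of $\psi$ to control the Schwartz seminorms of the localized symbol uniformly. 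The estimate nevertheless holds by a direct case split: for $s:=t2^{2j}\ge 1$ the kernel $L^1$ norm is $O(1)\le Cs^\theta$, while for $s<1$ every $\xi$-derivative of $e^{-s|\xi|^2/2}-1$ up to order $d+1$ on the annulus is $O(s)\le Cs^\theta$, giving the bound. With those two repairs, the argument is complete and matches the classical proof that the paper references.
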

Notice that from \eqref{eq:Pt-I} it readily follows that if $f\in \mathcal C^{\gamma + 2 \theta}$ for some $0<\theta<1$, then for $t>s>0$ we have 
\begin{equation}\label{eq:PcontC}
\|P_tf-P_s f\|_{\gamma}\leq c (t-s)^{\theta} \|f\|_{\gamma+ 2\theta }.
\end{equation}
In other words, this means that if $f\in  \mathcal C^{\gamma+2\theta}$ then $P_\cdot f\in C_T \mathcal C^\gamma$ (and in fact it is $\theta$-H\"older continuous in time). 
We also recall that Bernstein's inequalities hold (see \cite[Lemma 2.1]{bahouri} and \cite[Appendix A.1]{gubinelli_imkeller_perkowski}), that is for $\gamma \in \mathbb R$ there exists a constant $c>0$ such that
\begin{equation}\label{eq:nabla}
\|\nabla g\|_{\gamma} \leq c  \|g\|_{\gamma+1},
\end{equation}  
for all $g\in \mathcal C^{1+\gamma}$. 
Using Schauder's and Bernstein's inequalities we can easily obtain a useful estimate on the gradient of the semigroup, as we see below. 
\begin{lemma}\label{lm:nablaP}
Let $\gamma\in \mathbb R$ and $\theta \in (0,1)$. If $g\in \mathcal C^\gamma$ then for all $t>0$ we have $\nabla (P_tg) \in \mathcal C^{\gamma +2\theta -1}$ and
\begin{equation}\label{eq:nablaP}
\|\nabla (P_tg)\|_{\gamma+2\theta-1} \leq c t^{-\theta}  \|g\|_{\gamma}.
\end{equation}  
\end{lemma}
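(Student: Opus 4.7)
The plan is to obtain the estimate by chaining the two tools we have just been given: Schauder's estimate \eqref{eq:Pt} to upgrade regularity at the cost of a factor $t^{-\theta}$, and Bernstein's inequality \eqref{eq:nabla} to convert that gained regularity into control of the gradient.

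Concretely, I would first apply Schauder to $g \in \mathcal{C}^\gamma$ with the exponent $\theta \in (0,1)$ fixed in the statement, yielding
\begin{equation*}
\|P_t g\|_{\gamma + 2\theta} \leq c\, t^{-\theta}\, \|g\|_\gamma,
\end{equation*}
so in particular $P_t g \in \mathcal{C}^{\gamma + 2\theta}$ for every $t>0$. Since $\gamma + 2\theta = (\gamma + 2\theta - 1) + 1$, the space $\mathcal{C}^{\gamma + 2\theta}$ is exactly of the form $\mathcal{C}^{1 + \tilde{\gamma}}$ with $\tilde{\gamma} = \gamma + 2\theta - 1$, which is the regularity index required by Bernstein's inequality.

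I would then apply Bernstein's inequality \eqref{eq:nabla} to $g' := P_t g$ with index $\tilde{\gamma} = \gamma + 2\theta - 1$ to obtain
\begin{equation*}
\|\nabla (P_t g)\|_{\gamma + 2\theta - 1} \leq c\, \|P_t g\|_{\gamma + 2\theta}.
\end{equation*}
Combining the two displays immediately gives the claim, with a constant $c$ that depends only on $\gamma$ and $\theta$ (and the dimension).

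There is no real obstacle here: the lemma is a direct consequence of the two estimates cited just above it in the text. The only things to check are that the assumption $\theta \in (0,1)$ is compatible with both uses (Schauder is stated for $\theta \geq 0$ and Bernstein for any real index, so this is fine), and that we are allowed to use the same symbol $c$ for the two successive multiplicative constants, which is granted by the convention stated at the end of Section \ref{sc:prelim}.
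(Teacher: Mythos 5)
Your proof is correct and is exactly the argument the paper intends: the text immediately preceding the lemma says it follows by combining Schauder's and Bernstein's inequalities, which is precisely what you do by first applying \eqref{eq:Pt} and then \eqref{eq:nabla} with index $\gamma+2\theta-1$. Nothing to add.
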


The following is an important estimate which allows to define the so called
{\it pointwise product} between certain distributions and functions, which is based on Bony's estimates. For details see \cite{bony} or \cite[Section 2.1]{gubinelli_imkeller_perkowski}. Let   $f \in \mathcal C^\alpha$ and $g\in\mathcal C^{-\beta}$ with $\alpha-\beta>0$ and $\alpha,\beta>0$. Then the
{pointwise product} $ f \, g$ is well-defined as an element of $\mathcal C^{-\beta}$ and  there exists a constant $c>0$ such that 
\begin{equation}\label{eq:bony}
\| f \, g\|_{-\beta} \leq c \| f \|_\alpha \|g\|_{-\beta}.
\end{equation} 
Moreover if $f$ and $g$ are continuous functions defined on $[0,T]$ with values in the above Besov spaces,
one can easily show that the product is also continuous with values in  $\mathcal C^{-\beta}$, and 
\begin{equation}\label{eq:bonyt}
\| f \, g\|_{C_T\mathcal C^{-\beta}} \leq c \| f \|_{C_T \mathcal C^\alpha} \|g\|_{C_T \mathcal C^{-\beta}} .
\end{equation}

\subsection{Assumptions} 
We now collect the assumptions on the distributional term $b$, the nonlinearity  $ F$ and $\tilde F$
(see \eqref{Ftilde})
and on the initial condition $v_0$ that will be used later on in order for  PDE \eqref{eq:FPpde} to be well-defined and for the  McKean-Vlasov problem \eqref{eq:McKean} to be solved. 
\begin{assumption} 
 \label{ass:param-b} 
Let $0<\beta<1/2$ and  $b\in C_T \mathcal C^{(-\beta)+}$. In particular $b\in C_T \mathcal C^{-\beta}$.
\end{assumption}

In the following result we construct a sequence $b^n$ using the heat semigroup and prove certain properties.
\begin{proposition} \label{pr:L24}
  Let $b$ as in Assumption \ref{ass:param-b}.
 Let us define a sequence $(b^n)$ such that, for any fixed $t\in[0,T]$ and for all $n\geq1$ we have
$$b^n(t, \cdot):= \phi_n \ast b(t, \cdot),$$ 
where $ \phi_n (x) := p_{1/n}(x)$ and $p$ is the Gaussian kernel defined in \eqref{eq:pt}.

\begin{itemize}
\item [(i)] For each $n$,  $b^n$ is globally bounded, together with all its space derivatives.
\item [(ii)]
For each $n$, $t\mapsto b^n(t,\cdot)$ is continuous in $\mathcal C^{\gamma}$ for all $\gamma>0$. In particular $b^n \in C_T \mathcal C^{(-\beta)+}$.
\item [(iii)] We have the convergence   $b^n\to b$ in $C_T\mathcal C^{-\beta}$. 
\end{itemize}
\end{proposition}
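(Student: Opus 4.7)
The plan is to identify $b^n(t,\cdot)$ with the semigroup action $P_{1/n}\, b(t,\cdot)$ (since $\phi_n = p_{1/n}$ is the heat kernel at time $1/n$), and then to derive all three assertions as essentially immediate consequences of the Schauder estimates in Lemma \ref{lm:schauder}. The three points differ only in which of the two estimates \eqref{eq:Pt} and \eqref{eq:Pt-I} is invoked, together with, for (iii), the inductive characterisation of $\mathcal{C}^{(-\beta)+}$ from Lemma \ref{lm:inductive}.

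For (i), I would apply \eqref{eq:Pt} with $\gamma = -\beta$ and arbitrary $\theta \geq 0$ to obtain $\|b^n(t,\cdot)\|_{-\beta+2\theta} \leq c\, n^\theta \|b(t,\cdot)\|_{-\beta}$, with $t$-uniformity coming from $\|b\|_{C_T \mathcal{C}^{-\beta}} < \infty$. Choosing $\theta$ so that $-\beta + 2\theta$ exceeds any prescribed integer $k$, and then invoking Bernstein's inequality \eqref{eq:nabla} (or equivalently the embedding into classical $C^k$), proves global boundedness of $b^n$ together with all its spatial derivatives. For (ii), the same inequality applied to $b(t,\cdot) - b(s,\cdot)$ gives, for any $\gamma > 0$ and a correspondingly large $\theta$,
\[
\|b^n(t,\cdot) - b^n(s,\cdot)\|_\gamma \leq c\, n^\theta \|b(t,\cdot) - b(s,\cdot)\|_{-\beta},
\]
which vanishes as $s \to t$ by continuity of $b$ in $\mathcal{C}^{-\beta}$. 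Since $\gamma > 0 > -\beta$, the inclusion $b^n \in C_T \mathcal{C}^{(-\beta)+}$ is then immediate.

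The only mildly non-mechanical step is (iii), where I would exploit the ``$+$'' in the assumption $b \in C_T\mathcal{C}^{(-\beta)+}$. By Lemma \ref{lm:inductive} there exists $\alpha > -\beta$ such that $b \in C_T \mathcal{C}^\alpha$. Setting $\theta := (\alpha + \beta)/2 > 0$ and applying the ``$P_t - I$'' Schauder estimate \eqref{eq:Pt-I} with base regularity $\gamma = -\beta$, so that $\gamma + 2\theta = \alpha$, yields
\[
\|b^n(t,\cdot) - b(t,\cdot)\|_{-\beta} \leq c\, n^{-\theta} \|b(t,\cdot)\|_\alpha,
\]
and taking the supremum over $t \in [0,T]$ gives $b^n \to b$ in $C_T \mathcal{C}^{-\beta}$. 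The only real obstacle here is conceptual: one should not try to prove $P_{1/n} b \to b$ in the very space $\mathcal{C}^{-\beta}$ where $b$ only sits, because a genuine rate of convergence requires trading a little extra Besov regularity for a power of $1/n$, which is precisely what Assumption \ref{ass:param-b} provides.
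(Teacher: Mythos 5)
Your proof is correct and follows essentially the same route as the paper: identify $b^n(t,\cdot) = P_{1/n}\,b(t,\cdot)$, use the Schauder estimate \eqref{eq:Pt} for (i) and (ii), and for (iii) trade the extra regularity guaranteed by $b \in C_T\mathcal{C}^{(-\beta)+}$ (i.e.\ $b \in C_T\mathcal{C}^{-\beta'}$ for some $\beta' < \beta$) against a power of $1/n$ via \eqref{eq:Pt-I}. The only cosmetic difference is that in (i) you make the passage from ``$b^n(t,\cdot) \in \mathcal{C}^\gamma$ for all $\gamma>0$'' to boundedness of all derivatives explicit via Bernstein, where the paper leaves that step implicit.
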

\begin{proof}
If $\psi \in \mathcal S'$ then $\phi_n\ast \psi = P_{1/{n}} \psi$, thus  we have $b^n(t,\cdot) =  P_{1/{n}} b (t,\cdot) $. 
\begin{itemize}
\item [(i)] We have 
\[
\|P_{1/{n}} b(t)\|_{ \gamma} \leq c  \left(\frac1 {n}\right)^{-\frac{\gamma+\beta}2} \|b(t)\|_{-\beta}, 
\] 
for any $\gamma>0$ by Lemma \ref{lm:schauder}.
\item [(ii)]  For any $t,s\in[0,T]$  we have 
\begin{align*}
\|b^n(t, \cdot)-b^n(s, \cdot)\|_{\gamma}
=&
\|P_{1/n} b(t, \cdot)-P_{1/n} b(s, \cdot)\|_{\gamma} \\
= &
\|P_{1/n} (b(t, \cdot)- b(s, \cdot))\|_{\gamma}\\
\leq & c \left(\frac1{n}\right)^{-\frac{\gamma+\beta}2}
 \| b(t, \cdot)- b(s, \cdot)\|_{-\beta},
\end{align*}
having used estimate \eqref{eq:Pt} in Lemma \ref{lm:schauder} (with $\theta =\frac{\gamma+\beta}2$). The  conclusion now follows. 
\item[(iii)]  For $t\in[0,T]$ we have, using  \eqref{eq:Pt-I} in Lemma \ref{lm:schauder}
\begin{align*}
\|b^n(t, \cdot)-b(t, \cdot)\|_{-\beta}
=&
\|P_{1/n} b(t, \cdot)-b(t, \cdot)\|_{-\beta} \\
\leq & c \left(\frac1{n}\right)^{\frac{\beta-\beta'}2}
 \| b(t, \cdot)\|_{-\beta'},
\end{align*}
 for some $\beta'<\beta$ such that $b\in \mathcal C^{-\beta'} $, which exists by Assumption \ref{ass:param-b}. Now we  take the sup over $t\in[0,T]$ and we have $\|b^n-b\|_{C_T\mathcal C^{-\beta}} \to 0$ as $n \to \infty$, since $\beta-\beta'>0$.
\end{itemize}
\end{proof}

\begin{assumption}\label{ass:F}
Let $F$ be Lipschitz and bounded. 
\end{assumption} 

\begin{assumption}\label{ass:Ftilde}
 Let $\tilde F(z) := z F(z)$ be globally Lipschitz.
\end{assumption} 

We believe that Assumption \ref{ass:Ftilde} is unnecessary. Indeed by Assumption~\ref{ass:F} one gets that $\tilde F$ is locally Lipschitz with linear growth. This condition could be sufficient to show that a solution PDE \eqref{eq:FPpde}  exists, for example using techniques similar to the ones appearing in \cite[Proposition 3.1]{issoglio19} and \cite[Theorem 22]{lieber-oudjane-russo}. However we assume here $\tilde F $ to be Lipschitz to improve the readability of the paper. 

\begin{assumption}\label{ass:v0}
Let $v_0\in \mathcal C^{\beta+}$. 
\end{assumption}

\begin{assumption}\label{ass:udensity}
Let  $v_0$ be a bounded probability density.  
\end{assumption}

\subsection{The singular Martingale Problem}\label{ssc:MP}
We conclude this section with a short recap of useful results from \cite{issoglio_russoMP}, where the authors consider the Martingale Problem for SDEs of the form 
\begin{equation}\label{eq:SDE}
X_t = X_0 + \int_0^t B(s, X_s)  \di s + W_t, \quad  X_0\sim \mu,
\end{equation}
where $B$ satisfies Assumption \ref{ass:param-b} (with $b=B$)  and $\mu$ is a given probability measure. Notice that this  SDE can be considered as the linear counterpart of the McKean-Vlasov problem \eqref{eq:McKean}, which can be obtained for example by  `fixing' a suitable function $v$ and considering $B = F(v)b$ in the SDE in  \eqref{eq:McKean}.

First of all, let us recall the definition of the operator $\mathcal L$ associated to SDE \eqref{eq:SDE} given in \cite{issoglio_russoMP}. The operator $\mathcal L$ is defined as  
\begin{equation}\label{eq:L}
\begin{array}{lcll}
\mathcal L  :  &\mathcal D_{\mathcal L}^0 &\to &\{\mathcal S'\text{-valued integrable functions}\}\\
& f & \mapsto & \mathcal L f:=   \dot f +  \frac12 \Delta f + \nabla f \, B,
\end{array}
\end{equation}
where 
 $$\mathcal D_{\mathcal L}^0 : = C_T D \mathcal C^{\beta+} \cap C^1([0,T]; \mathcal S'), 
 $$
and  $ D \mathcal C^{\gamma} = \{h: \R^d \to \R  \text{ differentiable such that} \nabla h \in \mathcal  C^\gamma \}$.
Here $f: [0,T]\times  \R^d \to \R$ and the function $\dot f:[0,T]\to \mathcal S'$ is the time-derivative.   Note also that $\nabla f \, B$ is well-defined using  \eqref{eq:bony} and Assumption \ref{ass:param-b}. 
The Laplacian $\Delta$ is intended in the sense of distributions.  Notice that the identity functions $\text{id}_i (x) = x_i$ for any $i=1, \ldots, d$ belong to $\mathcal D_{\mathcal L}^0 $ and we have ${\mathcal L} (\text{id}_i) = b_i$.

Next we give the definition of solution to the martingale problem in \cite[Definition 4.3]{issoglio_russoMP}: a couple  $(X, \mathbb P)$  is a {\em solution to the martingale problem with distributional drift $B$ and initial condition $\mu$} (for shortness, solution of MP  with drift $B$ and i.c.\ $\mu$) if and only if for every $f \in \mathcal D_{\mathcal L}$
\begin{equation}\label{eq:MP}
f(t, X_t) - f(0, X_0) - \int_0^t (\mathcal L f) (s, X_s) \di s
\end{equation}
is a local martingale under $\mathbb P$. The domain  $\mathcal D_{\mathcal L} $ is given by
\begin{equation}\label{eq:D}
\begin{array}{ll}
\mathcal D_{\mathcal L} : = &  
\{ f \in \mathcal C_T \mathcal C^{(1+\beta)+}:  \exists g \in   C_T\bar{\mathcal C}_c^{0+}  \text{ such that }  \\ 
&   f \text{ is a weak solution of }  \mathcal  L f =g  \text{ and } f(T) \in \bar{\mathcal C}_c^{(1+\beta)+}    \},
\end{array} 
\end{equation} 
where  $\mathcal L$ has been defined in  \eqref{eq:L},  and the spaces $\bar {\mathcal C}_c^{\gamma+}$ are defined as  $\bar {\mathcal C}_c^{\gamma+} =\cup_{\alpha>\gamma} \bar{\mathcal C}_c^\alpha $ where  $\bar{\mathcal C}_c^\alpha$ is the closure of compactly supported functions of   ${\mathcal C}^{\alpha}$ with respect to the norm of $\mathcal C^{\alpha}$.
Finally  we recall that 
 $f\in C_T \bar{\mathcal C}_c^{\gamma+} $ if and only if there exists $\alpha>\gamma $ such that $f\in C_T \bar{\mathcal C}_c^{\alpha}$, 
by Remark  \ref{rm:inductive} part (ii).
 We say that {\em  the martingale problem with drift $B$   and i.c.\ $\mu$ admits uniqueness} if, whenever we have two solutions $(X^1, \mathbb P^1)$ and  $(X^2, \mathbb P^2)$ with  $X^i_0\sim \mu$, $i=1,2$, then the law
of $X^1$ under $ \mathbb P^1 $ equals the law of $X^2$ under $\mathbb P^2$. 
With this definition at hand, we show in \cite[Theorem 4.11]
{issoglio_russoMP} that  MP admits existence and uniqueness.

\section{Fokker-Planck singular PDE} \label{sc:FPpde}

This section is devoted to the study of the singular Fokker-Planck equation \eqref{eq:FPpde}, recalled here for ease of reading
\[
\left\{
\begin{array}{l}
\partial_t v = \frac12 \Delta v -\text{div}(\tilde F( v)b)\\
v(0) = v_0.
\end{array}
\right.
\] 
After introducing the notions of solution for this PDE (weak and mild, which turns out to be equivalent,
see Proposition \ref{pr:weak=mild}), we will show that there exists a unique  solution in Theorem \ref{thm:esistunic} with Banach's fixed point theorem.

Below we will need mapping properties of the  function $\tilde F$ when viewed as operator  acting on $\mathcal C^\alpha$, for some $\alpha\in(0,1)$. To this aim, we make a slight abuse of notation and denote by $\tilde F$ the function when viewed as an operator, that is for $f\in\mathcal C^\alpha$ we have $\tilde F(f):= \tilde F(f(\cdot))$. We sometimes omit the brackets and write $\tilde F f $ in place of $\tilde F (f)$. The result below on $\tilde F$ is taken from \cite{issoglio19}, Proposition 3.1 and equation (32).
\begin{lemma}[Issoglio \cite{issoglio19}]\label{lm:F}
Under Assumption \ref{ass:Ftilde} and if $\alpha\in(0,1)$ then
\begin{itemize}
\item   $\tilde F : \mathcal C^\alpha \to \mathcal C^\alpha$ and for all $f,g \in \mathcal C^\alpha$
\[ 
\|\tilde F f -\tilde F g \|_\alpha \leq c (1+ \| f\|_\alpha^2 +  \| g\|_\alpha^2  )^{1/2} \|f-g\|_\alpha;
\]
\item for all $f \in \mathcal C^\alpha$, $\| \tilde F f \|_\alpha \leq c ( 1+ \| f \|_\alpha)$.
\end{itemize}
\end{lemma}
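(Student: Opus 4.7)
The plan is to prove the two estimates separately, using the equivalent Hölder norm $\|h\|_\alpha=\|h\|_\infty+[h]_\alpha$ from \eqref{eq:holder} (valid because $\alpha\in(0,1)$) together with the pointwise product rule
\[
\|uv\|_\alpha\leq \|u\|_\infty\|v\|_\alpha+\|v\|_\infty\|u\|_\alpha,
\]
which follows by expanding $u(x)v(x)-u(y)v(y)=u(x)(v(x)-v(y))+v(y)(u(x)-u(y))$ and can also be read off from Bony's estimate \eqref{eq:bony}.

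The linear-growth bound is immediate. Since $F$ is bounded (Assumption \ref{ass:F}) we have $\tilde F(0)=0\cdot F(0)=0$; combined with the Lipschitz constant $L$ of $\tilde F$ from Assumption \ref{ass:Ftilde}, this gives $|\tilde F(f(x))|\leq L|f(x)|$ and $|\tilde F(f(x))-\tilde F(f(y))|\leq L|f(x)-f(y)|$, so $\|\tilde F f\|_\alpha\leq L\|f\|_\alpha\leq c(1+\|f\|_\alpha)$.

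For the Lipschitz-type estimate I would start from the integral mean-value representation
\[
\tilde F(f)-\tilde F(g)=(f-g)\cdot\Phi,\qquad \Phi(x):=\int_0^1 \tilde F'\bigl(g(x)+s(f(x)-g(x))\bigr)\,\mathrm{d}s,
\]
which is available a.e.\ since $\tilde F$ is Lipschitz, and which immediately gives $\|\Phi\|_\infty\leq L$. The product rule then reduces matters to controlling $\|\Phi\|_\alpha$, through
\[
\|\tilde F(f)-\tilde F(g)\|_\alpha\leq (L+\|\Phi\|_\alpha)\|f-g\|_\alpha.
\]

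The main obstacle is the Hölder control of $\Phi$: Lipschitz continuity of $\tilde F$ alone is not enough. I would invoke the product structure $\tilde F(z)=zF(z)$ together with the regularity of $F$ from Assumption \ref{ass:F} to estimate the difference of the integrand at $x$ and $y$, obtaining $[\Phi]_\alpha\leq c([f]_\alpha+[g]_\alpha)$ and hence $\|\Phi\|_\alpha\leq c(1+\|f\|_\alpha+\|g\|_\alpha)$. This technical step is carried out in detail in \cite[Proposition~3.1]{issoglio19}. Substituting back yields $\|\tilde F(f)-\tilde F(g)\|_\alpha\leq c(1+\|f\|_\alpha+\|g\|_\alpha)\|f-g\|_\alpha$, and the stated form then follows from the Cauchy--Schwarz inequality $(1+a+b)^2\leq 3(1+a^2+b^2)$.
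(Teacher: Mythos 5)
Your linear-growth bound is correct and complete; note only that $\tilde F(0)=0$ is immediate from the definition $\tilde F(z)=zF(z)$ and needs neither Assumption~\ref{ass:F} nor boundedness of $F$. For comparison, the paper offers no proof of this lemma at all — it simply cites \cite[Proposition~3.1 and eq.~(32)]{issoglio19} — so yours is the only argument on the table, and the fair question is whether it could be completed under the stated hypotheses.

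For the Lipschitz-type estimate there is a genuine gap at exactly the step you postpone. After writing $\tilde F(f)-\tilde F(g)=(f-g)\Phi$ with $\Phi(x)=\int_0^1\tilde F'\bigl(g(x)+s(f(x)-g(x))\bigr)\,\mathrm ds$ and applying the product rule, everything hinges on $[\Phi]_\alpha\lesssim [f]_\alpha+[g]_\alpha$; but this does not follow from Assumptions~\ref{ass:F} and~\ref{ass:Ftilde}. One has $\tilde F'(z)=F(z)+zF'(z)$ a.e.; the first summand is Lipschitz, but the second involves $F'$, which under Assumption~\ref{ass:F} is merely a bounded measurable function with no modulus of continuity. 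In $\Phi(x)-\Phi(y)$ one inevitably meets a term of the form $h_s(y)\bigl(F'(h_s(x))-F'(h_s(y))\bigr)$, where $h_s=g+s(f-g)$, and this cannot be bounded by any positive power of $|h_s(x)-h_s(y)|$ without more regularity of $F'$. This is not a technicality but a real obstruction: take $F(z)=\max(-1,\min(1,z-1))$, which is bounded and $1$-Lipschitz, so $\tilde F(z)=zF(z)$ is globally Lipschitz but $\tilde F'$ jumps at $z=2$. With $f=2+\epsilon+\psi$, $g=2-\epsilon+\psi$ for a truncated identity $\psi$, a direct computation gives $[\tilde F f-\tilde F g]_\alpha\gtrsim \epsilon^{1-\alpha}$, while $\|f-g\|_\alpha=2\epsilon$ and $\|f\|_\alpha,\|g\|_\alpha=O(1)$; the asserted inequality would force $\epsilon^{1-\alpha}\lesssim\epsilon$, which fails as $\epsilon\to 0$. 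The estimate therefore requires a strictly stronger hypothesis than ``$\tilde F$ Lipschitz'' (something like $\tilde F'$ Lipschitz, or $\alpha$-Hölder), and that must be what \cite{issoglio19} actually imposes. Deferring the key inequality to that reference is the same move the paper makes, but the surrounding sketch — claiming it follows from the regularity in Assumption~\ref{ass:F} — does not hold up.
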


This mapping property allows us to define  weak and mild solutions for the singular Fokker-Planck equation. 

\begin{definition}\label{def:wmsol} 
Let Assumptions \ref{ass:param-b}, \ref{ass:Ftilde} and \ref{ass:v0} hold and let $ v\in C_T\mathcal C^{\beta+}$. 
\begin{itemize}
\item[(i)] We say that  $v$ is a  \emph{mild solution} for the singular Fokker-Planck equation \eqref{eq:FPpde} if  the integral equation
\begin{equation}\label{eq:mildFP}
v(t)  = P_t v_0 - \int_0^t  P_{t-s} [ \text{{\em div}} (\tilde F(v(s)) b(s) ) ]  \di s, \  t\in[0,T]
\end{equation}
 is satisfied.
\item[(ii)] We say that  $v$ is a  {\em weak solution}  for the singular Fokker-Planck equation \eqref{eq:FPpde} if for all $\varphi\in \mathcal S (\R^d)$
 and all $t\in[0,T]$ we have
\begin{align}\label{eq:weakFP2}
\langle \varphi, v(t)\rangle  =  & \langle   \varphi, v_0\rangle + \int_0^t \langle \frac12 \Delta \varphi, v(s) \rangle \di s  
+ \int_0^t \langle  \nabla \varphi,  \tilde F( v )(s) b(s) \rangle \di s . 
\end{align}
\end{itemize}
Note that the term $\tilde F( v )(s) b(s) $ appearing in both items is well-defined as an element of $ \mathcal C^{-\beta}$ thanks to \eqref{eq:bony} and Assumption \ref{ass:param-b} together with Lemma \ref{lm:F}.
\end{definition}

\begin{proposition}\label{pr:weak=mild}
Let $ v\in C_T\mathcal C^{\beta+}$. The function $v$ is a weak solution of PDE \eqref{eq:FPpde} if and only if it is a mild solution.
\end{proposition}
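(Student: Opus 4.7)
The plan is to reduce the equivalence to the linear statement already provided by Lemma \ref{lm:heat}. Setting $g(s,\cdot) := -\mathrm{div}\bigl(\tilde F(v(s,\cdot))\, b(s,\cdot)\bigr)$, equation \eqref{eq:FPpde} reads $\partial_t v = \tfrac12 \Delta v + g$, and the lemma asserts that such an equation admits a unique weak solution, given by the explicit mild formula in \eqref{eq:heat}, which with our sign convention is precisely \eqref{eq:mildFP}. The proof then splits into two bookkeeping checks: (a) verifying that $g$ is regular enough for Lemma \ref{lm:heat} to apply; and (b) matching the two notions of weak solution via an integration by parts on the test function.

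For (a), I would argue as follows. Since $v\in C_T\mathcal{C}^{\beta+}$, Lemma \ref{lm:F} gives $\tilde F(v)\in C_T\mathcal{C}^{\beta+}$. Coupling this with $b\in C_T\mathcal{C}^{-\beta}$ from Assumption \ref{ass:param-b}, the pointwise product estimate \eqref{eq:bonyt} yields $\tilde F(v)\,b\in C_T\mathcal{C}^{-\beta}$, and then Bernstein's inequality \eqref{eq:nabla} produces $g\in C_T\mathcal{C}^{-\beta-1}\hookrightarrow C([0,T];\mathcal{S}')$. In particular $g$ meets the continuity assumption required by Lemma \ref{lm:heat}.

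For (b), I would invoke the definition of the distributional divergence: for every $\varphi\in\mathcal{S}(\R^d)$ and every $s\in[0,T]$,
\[
\langle \nabla\varphi,\, \tilde F(v(s))\,b(s)\rangle \;=\; -\langle \mathrm{div}\bigl(\tilde F(v(s))\,b(s)\bigr),\,\varphi\rangle \;=\; \langle g(s),\,\varphi\rangle.
\]
Substituting this identity into \eqref{eq:weakFP2} converts our weak formulation of \eqref{eq:FPpde} into exactly the weak formulation for the linear heat equation with source $g$ and initial datum $v_0$ featured in Lemma \ref{lm:heat}. The two directions of the equivalence then follow at once: the existence part of the lemma asserts that the mild formula \eqref{eq:mildFP} \emph{is} a weak solution, which gives mild $\Rightarrow$ weak; the uniqueness part forces any weak solution to coincide with the mild one, which gives weak $\Rightarrow$ mild.

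I do not expect any real obstacle. The only point worth flagging is that the source $g$ is genuinely distributional, living only in $\mathcal{C}^{-\beta-1}$ and not in any function space; however this is precisely the regime already covered by Lemma \ref{lm:heat}, so no extra approximation or mollification step is required, and the nonlinear nature of $\tilde F$ plays no role in the argument beyond ensuring, through Lemma \ref{lm:F}, that $\tilde F(v)$ inherits the regularity of $v$.
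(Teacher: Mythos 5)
Your proposal is correct and takes essentially the same approach as the paper, which simply cites Lemma \ref{lm:heat} with $g(s) := -\mathrm{div}(\tilde F(v(s))\,b(s))$. You have filled in the bookkeeping that the paper leaves implicit — the regularity check that $g \in C_T\mathcal{C}^{-\beta-1} \subset C([0,T];\mathcal{S}')$ and the integration-by-parts step identifying the two weak formulations — and both are handled accurately.
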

\begin{proof}
This is a consequence of Lemma \ref{lm:heat} with $g(s):=- \text{div} (\tilde F(v(s)) b(s) )  $.
\end{proof}

Let us denote by $I$ the solution map for the mild solution of PDE \eqref{eq:FPpde}, that is for $v\in C_T \mathcal C^\alpha$ for some $\alpha\in(0,1)$ we have 
$$
I_t(v):= P_t v_0 - \int_0^t P_{t-s} [\text{div}(\tilde F (v(s)) b(s)) ] \di s.
$$ 
Then a mild solution of \eqref{eq:FPpde} is a solution of $v = I(v)$, in other words it is a fixed point of $I$.

We present now an a priori bound for mild solutions, if they exist. 
\begin{proposition}\label{pr:priori}
Let Assumptions \ref{ass:param-b}, \ref{ass:Ftilde} and \ref{ass:v0} hold.  Let $\alpha \in(\beta, 1-\beta)$.
If $v\in C_T\mathcal C^{\alpha}$ is such that $v =  I(v)$, then we have  
$$\|v\|_{C_T\mathcal C^{\alpha}}\leq K,
$$
where $K $ is a constant depending on  $ 
\|v_0\|_{\alpha},  \|b\|_{C_T\mathcal C^{-\beta}}, T$. Moreover $K$ is an increasing function of $\|b\|_{C_T\mathcal C^{-\beta}} $. 
\end{proposition}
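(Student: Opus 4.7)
\medskip
\noindent\textbf{Plan of proof.} The strategy is to apply the triangle inequality to the fixed-point identity $v(t)=I_t(v)$, estimate the two resulting terms in the $\mathcal C^\alpha$-norm, and conclude via a fractional Gr\"onwall argument. The first term $P_t v_0$ is harmless: by Lemma~\ref{lm:schauder} (with $\theta=0$) we have $\|P_t v_0\|_\alpha\leq c\|v_0\|_\alpha$ uniformly in $t\in[0,T]$.

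\medskip
\noindent For the integral term, the key observation is that $P_{t-s}$ and the divergence commute, so that we can pull the derivative out of the integrand and use Lemma~\ref{lm:nablaP} instead of a naive Schauder bound. Writing $h(s):=\tilde F(v(s))b(s)$, we have $h(s)\in\mathcal C^{-\beta}$ by the Bony estimate \eqref{eq:bony} combined with Lemma~\ref{lm:F}, and
\[
\|h(s)\|_{-\beta}\leq c\,\|\tilde F(v(s))\|_\alpha\,\|b(s)\|_{-\beta}\leq c\,(1+\|v(s)\|_\alpha)\,\|b(s)\|_{-\beta}.
\]
Applying Lemma~\ref{lm:nablaP} with $\gamma=-\beta$ and $2\theta=\alpha+\beta+1$ gives
\[
\|P_{t-s}[\mathrm{div}\,h(s)]\|_\alpha = \|\nabla P_{t-s} h(s)\|_\alpha \leq c\,(t-s)^{-\theta}\,\|h(s)\|_{-\beta},
\]
where the crucial point is that the assumption $\alpha<1-\beta$ ensures $\theta<1$, so that the singularity $(t-s)^{-\theta}$ is integrable. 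This is the only place where the upper restriction on $\alpha$ is needed; I expect this to be the most delicate step to get right.

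\medskip
\noindent Combining the two bounds and setting $M(t):=\|v(t)\|_\alpha$ and $B:=\|b\|_{C_T\mathcal C^{-\beta}}$, we obtain
\[
M(t)\leq c\,\|v_0\|_\alpha + c\,B\int_0^t (t-s)^{-\theta}(1+M(s))\,\di s,
\]
that is,
\[
M(t)\leq c\,\|v_0\|_\alpha + c\,B\,\frac{T^{1-\theta}}{1-\theta} + c\,B\int_0^t (t-s)^{-\theta}M(s)\,\di s.
\]
At this point I would invoke the fractional Gr\"onwall inequality recalled in Appendix~\ref{app:frGronwall} to deduce $M(t)\leq K$ for all $t\in[0,T]$, with $K$ depending only on $\|v_0\|_\alpha$, $B$, $T$ (through the exponent $\theta$, which in turn depends only on $\alpha,\beta$). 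Since every factor in the above estimate is either independent of $B$ or linear/monotone in $B$, and the fractional Gr\"onwall constant depends monotonically on the forcing, $K$ is an increasing function of $B$, which concludes the proof after taking the supremum over $t\in[0,T]$.
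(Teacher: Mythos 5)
Your proof is correct and follows essentially the same path as the paper: bound $\tilde F(v(s))b(s)$ in $\mathcal C^{-\beta}$ via the Bony product estimate and Lemma~\ref{lm:F}, pick up a factor $(t-s)^{-(\alpha+\beta+1)/2}$ from the smoothing of the semigroup on a divergence, observe the exponent is strictly less than one because $\alpha<1-\beta$, and close with the fractional Gr\"onwall inequality and monotonicity of the Mittag--Leffler function. The only (cosmetic) difference is that you commute $\mathrm{div}$ past $P_{t-s}$ and cite Lemma~\ref{lm:nablaP} applied componentwise, whereas the paper keeps $\mathrm{div}$ inside and uses Bernstein then Schauder directly; since Lemma~\ref{lm:nablaP} is itself derived from exactly those two estimates, the two computations coincide.
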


\begin{proof}
Let
\begin{equation}\label{eq:H}
H_s(v):= \tilde F (v(s)) b(s)
\end{equation}
 for brevity.
 Using Bernstein's inequality \eqref{eq:nabla} we get
\[
\|\text{div}H_s(v) \|_{-\beta-1}\leq \sum_{i=1}^d \|   \frac{\partial}{\partial x_i} H_s(v) \|_{-\beta-1} \leq c  \sum_{i=1}^d \|   H_s(v)  \|_{-\beta} = cd \|   H_s(v)  \|_{-\beta}.
\]
Then using the definition of $H$ from \eqref{eq:H},  pointwise product property \eqref{eq:bony} 
(since $\alpha-\beta>0$)  and Lemma \ref{lm:F} we have
\begin{align}\label{eq:divH} 
\|\text{div}H_s(v) \|_{-\beta-1}  \leq c  \|    \tilde F (v(s)) \|_{\alpha} \| b(s)  \|_{-\beta}  \leq c (1+ \|  v(s) \|_{\alpha}) \| b(s)  \|_{-\beta},
\end{align}
where we recall that $c$ is now a constant that changes from line to line. 
Now using this, together with Schauder's estimates (Lemma \ref{lm:schauder} with $\theta:= \tfrac{\alpha+\beta+1}{2}$) and the fact that $\theta<1$,  for fixed $t\in[0,T]$, one obtains
\begin{align*}
\|v(t)\|_\alpha 
&\leq \| P_tv_0\|_\alpha + \int_0^t \|   P_{t-s} [\text{div} H_s(v)]  \|_\alpha \di s\\
&\leq c\| v_0\|_\alpha + \int_0^t c(t-s)^{-\frac{\alpha+\beta+1}{2}}\| \text{div}H_s(v)   \|_{-\beta-1} \di s\\
&\leq c\| v_0\|_\alpha + \int_0^t c(t-s)^{-\frac{\alpha+\beta+1}{2}} (1+ \|  v(s) \|_{\alpha}) \| b(s)  \|_{-\beta}  \di s\\
&\leq c\| v_0\|_\alpha + c  \| b  \|_{C_T\mathcal C^{-\beta}} \int_0^t (t-s)^{-\frac{\alpha+\beta+1}{2}} (1+ \|  v(s) \|_{\alpha})   \di s\\
&\leq c\| v_0\|_\alpha + c  \| b  \|_{C_T\mathcal C^{-\beta}}  T^{ \frac{1-\alpha-\beta}{2}} + c \| b  \|_{C_T\mathcal C^{-\beta}} \int_0^t (t-s)^{-\frac{\alpha+\beta+1}{2}} \|  v(s) \|_{\alpha}   \di s.
\end{align*}
Now by a generalised Gronwall's inequality (see Lemma \ref{lm:fr-gronwall}) we have 

\begin{align*}
\|  v(t) \|_{\alpha} &\leq  [ c\| v_0\|_\alpha + c  \| b  \|_{C_T\mathcal C^{-\beta}}  T^{ \frac{1-\alpha-\beta}{2}} ] E_\eta ( c  \| b  \|_{C_T\mathcal C^{-\beta}} \Gamma (\eta)t^\eta),
\end{align*}
with $\eta ={-\tfrac{\alpha+\beta+1}{2}+1} =\frac{1-\alpha-\beta}{2} >0$ and where $E_\eta$ is the Mittag-Leffler function,
  see Lemma \ref{lm:fr-gronwall}.
  Now taking the sup over $t\in[0,T]$ and using the fact that $E_\eta$ is increasing 
  we get 
\begin{align*}
 \|v&\|_{C_T\mathcal C^\alpha} \\
 &\leq \left[ c\| v_0\|_\alpha + c  \| b  \|_{C_T\mathcal C^{-\beta}}  T^{ \frac{1-\alpha-\beta}{2}} \right]  E_\eta \left( c  \| b  \|_{C_T\mathcal C^{-\beta}} \Gamma \left(\frac{1-\alpha-\beta}{2} \right)T^{\frac{1-\alpha-\beta}{2}}\right)\\
 &\leq \left[ c\| v_0\|_\alpha + c  \| b  \|_{C_T\mathcal C^{-\beta}}  T \right]  E_\eta \left( c  \| b  \|_{C_T\mathcal C^{-\beta}} \Gamma(1)T\right)\\
& =: K.
\end{align*}
This concludes the proof.
\end{proof}

We are interested in finding a mild solution of \eqref{eq:FPpde} according to Definition \ref{def:wmsol}, in the space $C_T\mathcal C^{\beta+}$.
Let us denote by $w(t) := v(t) -P_t v_0$  and by 
\begin{equation}\label{eq:J}
J_t(w)
:= \int_0^t P_{t-s} [\text{div}(\tilde F (w(s)+P_s v_0) b(s)) ] \di s .
\end{equation} 
Then the mild formulation \eqref{eq:mildFP} is equivalent  to 
\begin{equation}\label{eq:mildw}
w(t)=J_t(w),
\end{equation}
since $P_t v_0 \in C_T \mathcal C^{\beta+}$.

Then a mild solution of \eqref{eq:FPpde} is $v(t) = w(t) +P_tv_0 $ where $w$ is a solution of \eqref{eq:mildw}, in other words $w$ is a fixed point of the map $J$. 
For any  $\alpha \in \R$ we introduce a family of equivalent norms   in $ C_T\mathcal C^\alpha$ given by
 \begin{equation*}
 \|w \|^{(\rho)}_{C_T\mathcal C^\alpha} := \sup_{t\in[0,T]}  e^{-\rho t} \|w(t) \|_{\alpha} .
\end{equation*}
Consider then the $\rho$-ball in  $C_T \mathcal C^\alpha$ of radius $M$, given by 
\begin{equation}\label{eq:E}
E^\alpha_{\rho, M} := \{ v\in  C_T \mathcal C^\alpha : \|v \|^{(\rho)}_{C_T\mathcal C^\alpha} \leq M \}.
\end{equation}
Notice that these sets are closed with respect to the topology of $C_T\mathcal C^\alpha$, hence they are F-spaces, 
see \cite[Chapter 2.1]{dunford-schwartz},
with respect to the metric topology of $C_T\mathcal C^\alpha$.   
The $\rho$-equivalent norm generates  the $\rho$-equivalent metric with respect to the metric of $C_T\mathcal C^\alpha$, given by 
\begin{equation}\label{eq:drho}
\di_{\rho} (w,z):=   \|w(t) - z(t)\|^{(\rho)}_{C_T\mathcal C^\alpha} , \qquad \forall \rho\geq0,
\end{equation}
for any $w,z \in  C_T\mathcal C^\alpha$. 
Let $\rho_0>0$ and $ M_0>0$  be chosen arbitrarily. The $(E^\alpha_{\rho_0,M_0},\di_{\rho} )$ is again an F-space.

In the proofs below we will also use the notation 
\begin{equation}\label{eq:G}
G_s(w):= \tilde F (w(s)+P_sv_0) b(s)
\end{equation}
 for brevity.
In order to show that $J$ is a contraction, we first show that it maps balls into balls.
\begin{proposition}\label{pr:Jball}
Let Assumptions \ref{ass:param-b} and \ref{ass:Ftilde} hold.  Let $v_0\in \mathcal C^\alpha$ for some $\alpha \in(\beta, 1-\beta)$. 
Then there exists $\rho_0$ (depending on $\|b\|, \alpha$ and $ \beta$) and $M_*$ (depending on $\|b\|, \alpha$, $ \beta$,  $\rho_0$ and $\|v_0\|_\alpha$) such that 
\begin{equation}
J: E^\alpha_{\rho_0, M_0} \to E^\alpha_{\rho_0, M_0},
\end{equation}
for any $M_0\geq M_*$, where $E^\alpha_{\rho_0, M_0}$ have been defined in \eqref{eq:E}.
\end{proposition}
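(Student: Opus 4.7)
The plan is to estimate $\|J_t(w)\|_\alpha$ for $w \in E^\alpha_{\rho_0, M_0}$ and then take the $\rho$-equivalent supremum, exploiting the exponential weight to absorb the dangerous $w$-dependent term.

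First I would apply Schauder's estimate (Lemma \ref{lm:schauder}) with $\theta := \tfrac{\alpha+\beta+1}{2}$, which lies in $(0,1)$ since $\alpha < 1-\beta$. Combined with Bernstein's inequality \eqref{eq:nabla}, Bony's product estimate \eqref{eq:bony} (valid because $\alpha - \beta > 0$), and Lemma \ref{lm:F} applied to $G_s(w) = \tilde F(w(s)+P_s v_0) b(s)$, I obtain
\begin{align*}
\|P_{t-s}\text{div}\, G_s(w)\|_\alpha
&\leq c(t-s)^{-\theta}\|G_s(w)\|_{-\beta}\\
&\leq c(t-s)^{-\theta}\bigl(1+\|w(s)\|_\alpha+\|P_s v_0\|_\alpha\bigr)\|b(s)\|_{-\beta}.
\end{align*}
Using $\|P_s v_0\|_\alpha \leq c\|v_0\|_\alpha$ (Schauder again with zero regularity gain), integrating from $0$ to $t$, and using $\|b(s)\|_{-\beta}\leq \|b\|_{C_T\mathcal C^{-\beta}}$, the estimate splits into a $w$-independent piece and a piece involving $\int_0^t (t-s)^{-\theta}\|w(s)\|_\alpha\, ds$.

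Next I would multiply by $e^{-\rho t}$ and use the bound $\|w(s)\|_\alpha \le M_0 e^{\rho s}$ for $w\in E^\alpha_{\rho_0,M_0}$. The $w$-dependent integral becomes
$$
c\|b\|_{C_T\mathcal C^{-\beta}}M_0\, e^{-\rho t}\int_0^t (t-s)^{-\theta}e^{\rho s}\, ds
= c\|b\|_{C_T\mathcal C^{-\beta}}M_0\int_0^t u^{-\theta}e^{-\rho u}\, du
\leq c\|b\|_{C_T\mathcal C^{-\beta}}M_0\, \rho^{\theta-1}\Gamma(1-\theta)
$$
after the change of variable $u=t-s$ and extending the integral to $+\infty$. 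The $w$-independent piece is bounded by a constant $C_1 = C_1(\|b\|_{C_T\mathcal C^{-\beta}}, \|v_0\|_\alpha, T, \alpha, \beta)$, uniformly in $t\in[0,T]$.

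Combining, I get
$$
\|J(w)\|^{(\rho)}_{C_T\mathcal C^\alpha} \leq C_1 + c\|b\|_{C_T\mathcal C^{-\beta}}\,\rho^{\theta-1}\Gamma(1-\theta)\, M_0.
$$
Since $\theta<1$, the coefficient $c\|b\|_{C_T\mathcal C^{-\beta}}\rho^{\theta-1}\Gamma(1-\theta)$ tends to zero as $\rho\to\infty$. I therefore choose $\rho_0$ large enough that this coefficient is $\leq 1/2$, and then set $M_* := 2C_1$, so that for any $M_0 \ge M_*$ one has $\|J(w)\|^{(\rho_0)}_{C_T\mathcal C^\alpha}\leq C_1 + M_0/2 \leq M_0$, proving $J(E^\alpha_{\rho_0, M_0}) \subseteq E^\alpha_{\rho_0,M_0}$.

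The only mildly delicate point is the use of the $\rho$-weighted norm to convert the Volterra-type integral $\int_0^t (t-s)^{-\theta}e^{\rho s}ds$ into something small in $\rho$; this is the standard device that makes the fixed-point argument work and is the key technical input. Everything else is a direct assembly of the estimates recorded earlier, essentially identical in spirit to the a priori bound in Proposition \ref{pr:priori} but carried out in the $\rho$-equivalent norm rather than the uniform one.
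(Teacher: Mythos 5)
Your proposal is correct and follows essentially the same route as the paper: Schauder, Bernstein, Bony, and Lemma~\ref{lm:F} to estimate $\|\mathrm{div}\,G_s(w)\|_{-\beta-1}$, followed by the exponential-weight trick to turn $\int_0^t (t-s)^{-\theta_1}e^{-\rho(t-s)}\,ds$ into a factor $\Gamma(1-\theta_1)\rho^{\theta_1-1}$ that vanishes as $\rho\to\infty$ (with $\theta_1=\tfrac{\alpha+\beta+1}{2}<1$). The one small difference is bookkeeping: the paper keeps the constant and $\|P_sv_0\|_\alpha$ contributions bundled with $\|w(s)\|_\alpha$ and solves the resulting inequality for $\rho_0$ and $M_0$ jointly (which forces the auxiliary condition $M_0\ge\|v_0\|_\alpha$), whereas you separate out the $\rho$- and $M_0$-independent pieces into a constant $C_1$ and then simply set $M_*=2C_1$ after making the $M_0$-coefficient $\le 1/2$; this is a modest simplification of the same argument, not a different one.
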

\begin{proof}
Let $w\in E^\alpha_{\rho_0, M_0}\subset  C_T\mathcal C^\alpha$, for some $\rho_0, M_0$ to be specified later. 
Using the definition of $J$, Schauder's estimate for the semigroup (Lemma \ref{lm:schauder}) and the definition of $G$ from \eqref{eq:G} we have 
\begin{align} \label{eq:Jrho}
e^{-\rho_0 t}\|J_t(w)\|_{\alpha} &\leq \int_0^t e^{-\rho_0 t}\|P_{t-s} [ \text{div} G_s(w)]\|_{\alpha} \di s\\ \nonumber
 &\leq  c \int_0^t e^{-\rho_0 t} (t-s)^{-\frac{\alpha+\beta+1}{2}} \| \text{div} G_s(w)\|_{-\beta-1} \di s. \nonumber
\end{align}
Now we use Bernstein's inequality \eqref{eq:nabla} to bound
\[
\|\text{div}G_s(w) \|_{-\beta-1}\leq \sum_{i=1}^d \|   \frac{\partial}{\partial x_i} G_s(w) \|_{-\beta-1} \leq c  \sum_{i=1}^d \|   G_s(w)  \|_{-\beta},
\]
and  using again the definition of $G$ from \eqref{eq:G}, the  pointwise product property \eqref{eq:bony} 
(since $\alpha-\beta>0$)  and Lemma \ref{lm:F} we have
\begin{align}\label{eq:divG}  \nonumber
\|\text{div}G_s(w) \|_{-\beta-1}  &\leq c  \|    \tilde F (w(s) +P_sv_0) \|_{\alpha} \| b(s)  \|_{-\beta}  \\  \nonumber
&\leq c (1+ \|  w(s)+P_sv_0 \|_{\alpha}) \| b(s)  \|_{-\beta}\\
&\leq c (1+ \|  w(s)\|_{\alpha}+\|P_sv_0 \|_{\alpha}) \| b(s)  \|_{-\beta}.
\end{align}
Now plugging  \eqref{eq:divG} into \eqref{eq:Jrho} we get
\begin{align}  \label{eq:Jv0}
&e^{-\rho_0 t}\|J_t(w)\|_{\alpha} \\
 & \leq c \int_0^t e^{-\rho_0 (t-s)} (t-s)^{-\frac{\alpha+\beta+1}{2}} e^{-\rho_0 s} (1+ \|  w(s) \|_{\alpha}+ \|  P_sv_0 \|_{\alpha})  \| b(s)  \|_{-\beta}  \di s . 
 \nonumber
\end{align}
Using the assumption that $w\in E^\alpha_{\rho_0, M_0} $ and choosing $M_0\geq \|v_0\|_\alpha$  we have that $\sup_{s\in[0,T]}e^{-\rho_0 s} (1+ \|  w(s) \|_{\alpha}+ \|P_s v_0\|_{\alpha}) \leq (1+2M_0) $ (since $P_s$ is a contraction)
thus  \eqref{eq:Jv0} gives 
\begin{align}  \label{eq:Jv01}
e^{-\rho_0 t}\|J_t(w)\|_{\alpha}  
 \nonumber
 & \leq c \| b  \|_{C_T\mathcal C^{-\beta}} (1+2M_0) \int_0^t e^{-\rho_0 (t-s)} (t-s)^{-\frac{\alpha+\beta+1}{2}}  \di s \\
 & \leq c \| b  \|_{C_T\mathcal C^{-\beta}} (1+2M_0) \Gamma(\theta) \rho_0^{-\theta},
\end{align}
where  
$$\theta := \frac{1-\alpha-\beta}{2} = - \frac{\alpha+\beta+1}{2}+1 $$
 is positive by Assumption \ref{ass:param-b} and by $\alpha\in(\beta, 1-\beta)$.
We want to choose $\rho_0$ and $M_0$ such that $\sup_{t \in [0,T]}
e^{-\rho_0 t}\|J_t(w)\|_{\alpha} \leq M_0$, for which it is enough that
\begin{align}
c \| b  \|_{C_T\mathcal C^{-\beta}} (1+2M_0) \Gamma(\theta) \rho_0^{-\theta} &\leq M_0\\
\iff \nonumber\\
c \| b  \|_{C_T\mathcal C^{-\beta}} \Gamma(\theta) \rho_0^{-\theta} &\leq M_0 (1- 2 c \| b  \|_{C_T\mathcal C^{-\beta}} \Gamma(\theta) \rho_0^{-\theta})\\
\iff \nonumber\\
\frac{c \| b  \|_{C_T\mathcal C^{-\beta}} \Gamma(\theta) \rho_0^{-\theta} }{(1- 2 c \| b  \|_{C_T\mathcal C^{-\beta}} \Gamma(\theta) \rho_0^{-\theta})}&\leq M_0, 
\end{align}
provided that the denominator is positive.  To do so, we pick $\rho_0$ large enough so that 
\begin{equation}\label{eq:rho0}
1- 2c \| b  \|_{C_T\mathcal C^{-\beta}} \Gamma(\theta)\rho_0^{-\theta} > 0.
\end{equation}
Then we set
\[
  M_*:=  \frac{c \| b  \|_{C_T\mathcal C^{-\beta}} \Gamma(\theta) \rho_0^{-\theta}}{\left (1- 2 c \| b  \|_{C_T\mathcal C^{-\beta}}  \Gamma(\theta) \rho_0^{-\theta} \right)} \vee  \|v_0\|_\alpha,
\]
where  $\rho_0$ has been chosen in \eqref{eq:rho0}. 
Then for any $M_0\geq M_*$, and with this choice of $\rho_0$  we have indeed  that $\|J(w)\|^{(\rho_0)}_{C_T \mathcal C^\alpha} \leq M_0$ and therefore if $w\in E^\alpha_{\rho_0, M_0}$ then $J(w) \in E^\alpha_{\rho_0, M_0}$ as wanted. 
\end{proof}

We show below that it is possible to choose $\rho$ large enough such that 	$J$ is a contraction on $E^\alpha_{\rho_0,M_0}$ under $\di_\rho$, with $\rho_0, M_0$  chosen according to Proposition \ref{pr:Jball}.
\begin{lemma}\label{lm:Jcontr}
Let Assumptions \ref{ass:param-b} and \ref{ass:Ftilde} hold.  Let $v_0\in \mathcal C^\alpha$ for some $\alpha \in(\beta, 1-\beta)$. Let $J$ be defined in \eqref{eq:J}. 
 Let $\rho_0, M_0$ be chosen according to Proposition \ref{pr:Jball}.
Then there exists a constant $C$ (depending on $ T, \|b\|_{C_T \mathcal C^{-\beta}}, \alpha, \beta ,\rho_0 $ and $ M_0$) such that for all $w,z \in E^\alpha_{\rho_0, M_0}$ it holds
\[
\di_\rho(J(w),J(z))\leq C \rho^{-\theta} \di_\rho(w,z),
\]
where $\theta:= \tfrac{1-\alpha-\beta}{2} >0$.\\
In particular, for $\rho$ large enough, we have that $J$ is a contraction. 
\end{lemma}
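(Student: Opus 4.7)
The plan is to mimic closely the estimate in the proof of Proposition~\ref{pr:Jball}, but applied to the \emph{difference} $J_t(w)-J_t(z)$ rather than to $J_t(w)$ itself, and then to absorb the smallness into the parameter $\rho$ through the $\Gamma$-integral trick $\int_0^t e^{-\rho(t-s)}(t-s)^{\theta-1}\di s\leq\Gamma(\theta)\rho^{-\theta}$.

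First I would write
\[
J_t(w)-J_t(z)=\int_0^t P_{t-s}\bigl[\mathrm{div}\bigl(G_s(w)-G_s(z)\bigr)\bigr]\di s,
\]
and chain the same three analytic tools as before. Schauder's estimate (Lemma~\ref{lm:schauder}, with $\theta_{\mathrm{Sch}}=\tfrac{\alpha+\beta+1}{2}<1$) gives $\|P_{t-s}\mathrm{div}(\cdot)\|_\alpha\leq c(t-s)^{-\frac{\alpha+\beta+1}{2}}\|\mathrm{div}(\cdot)\|_{-\beta-1}$; Bernstein's inequality \eqref{eq:nabla} gives $\|\mathrm{div}(G_s(w)-G_s(z))\|_{-\beta-1}\leq c\|G_s(w)-G_s(z)\|_{-\beta}$; and the pointwise product \eqref{eq:bony}, valid since $\alpha>\beta$, combined with the local Lipschitz bound of Lemma~\ref{lm:F}, gives
\[
\|G_s(w)-G_s(z)\|_{-\beta}\leq c\bigl(1+\|w(s)+P_sv_0\|_\alpha^{2}+\|z(s)+P_sv_0\|_\alpha^{2}\bigr)^{1/2}\|w(s)-z(s)\|_\alpha\,\|b(s)\|_{-\beta}.
\]

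The key observation — and the one mildly technical step — is that the quadratic factor in the local Lipschitz constant can be \emph{replaced by a finite constant} because $w,z$ live in the ball $E^\alpha_{\rho_0,M_0}$: indeed $\|w(s)\|_\alpha\leq M_0 e^{\rho_0 s}\leq M_0 e^{\rho_0 T}$ and likewise for $z$, while $\|P_sv_0\|_\alpha\leq c\|v_0\|_\alpha$ by contractivity of the heat semigroup. Hence there exists $C_1=C_1(M_0,\rho_0,T,\|v_0\|_\alpha)$ such that
\[
\|G_s(w)-G_s(z)\|_{-\beta}\leq C_1\|b\|_{C_T\mathcal C^{-\beta}}\,\|w(s)-z(s)\|_\alpha.
\]

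Putting these together and multiplying by $e^{-\rho t}$, I would split $e^{-\rho t}=e^{-\rho(t-s)}e^{-\rho s}$ to recognise $e^{-\rho s}\|w(s)-z(s)\|_\alpha\leq\di_\rho(w,z)$ under the sup, leaving
\[
e^{-\rho t}\|J_t(w)-J_t(z)\|_\alpha\leq c\,C_1\|b\|_{C_T\mathcal C^{-\beta}}\,\di_\rho(w,z)\int_0^t e^{-\rho(t-s)}(t-s)^{\theta-1}\di s,
\]
with $\theta=\tfrac{1-\alpha-\beta}{2}>0$ (by Assumption~\ref{ass:param-b} and the choice $\alpha\in(\beta,1-\beta)$). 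The remaining integral is dominated by $\int_0^\infty e^{-\rho u}u^{\theta-1}\di u=\Gamma(\theta)\rho^{-\theta}$, yielding
\[
\di_\rho(J(w),J(z))\leq C\rho^{-\theta}\di_\rho(w,z),\qquad C:=c\,C_1\|b\|_{C_T\mathcal C^{-\beta}}\Gamma(\theta).
\]
Choosing $\rho$ so large that $C\rho^{-\theta}<1$ gives the claimed contraction. Note that this $\rho$ may (and typically will) exceed $\rho_0$, but since all the norms $\|\cdot\|^{(\rho)}_{C_T\mathcal C^\alpha}$ are pairwise equivalent, $E^\alpha_{\rho_0,M_0}$ remains closed under $\di_\rho$, so Banach's fixed point theorem applies in the sequel. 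I do not anticipate any real obstacle: the only point that requires care is the uniform control of the local Lipschitz constant on the ball, which is the reason the $\rho_0$-ball (rather than the whole space) is used here.
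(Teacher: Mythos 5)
Your proof is correct and follows essentially the same route as the paper's: Schauder then Bernstein then the pointwise product bound \eqref{eq:bony} and the local Lipschitz estimate of Lemma~\ref{lm:F}, uniform control of the Lipschitz constant on the ball $E^\alpha_{\rho_0,M_0}$, and finally the split $e^{-\rho t}=e^{-\rho(t-s)}e^{-\rho s}$ with the $\Gamma$-integral bound. The only cosmetic differences are that the paper replaces the quadratic factor $(1+\|\cdot\|_\alpha^2+\|\cdot\|_\alpha^2)^{1/2}$ by the cruder linear bound $1+\|w(s)\|_\alpha+\|z(s)\|_\alpha+2\|v_0\|_\alpha$ before estimating, and that you made the helpful explicit remark that the final $\rho$ may exceed $\rho_0$ without harm since the $\rho$-norms are all equivalent and $E^\alpha_{\rho_0,M_0}$ remains closed under $\di_\rho$.
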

\begin{proof}
Let $w,z \in E^\alpha_{\rho_0,M_0}$. Using the definition of $\di_\rho$, of the solution map $J$ and of $G$ as in \eqref{eq:G}   we have
\begin{align}\label{eq:Jdrho}\nonumber
\di_\rho &(J(w),J(z)) =  \sup_{t\in[0,T]} e^{-\rho t} \|J_t(w) - J_t(z)\|_\alpha\\ \nonumber 
& \leq   c \int_0^t e^{-\rho t} (t-s)^{-\frac{\alpha+\beta+1}{2}} \| \text{div} ( G_s(w)- G_s(z))\|_{-\beta-1} \di s\\
&=  c \int_0^t e^{-\rho (t-s)} (t-s)^{-\frac{\alpha+\beta+1}{2}} e^{-\rho s}\| \text{div} ( G_s(w)- G_s(z))\|_{-\beta-1} \di s. 
\end{align}
By Bernstein's inequality \eqref{eq:nabla}, pointwise product property \eqref{eq:bony}, the contraction property of $P_t$, local Lipschitz property of $\tilde F$ from Lemma \ref{lm:F} and definition of $\rho$-equivalent metric we get
\begin{align}\label{eq:divGG}\nonumber
e^{-\rho s}\|  \text{div} &( G_s(w)- G_s(z))\|_{-\beta-1} \\ \nonumber
& \leq c e^{-\rho s} \|\tilde F (w(s) - P_s v_0)-\tilde F (z(s) - P_s v_0)\|_{\alpha} \| b(s)  \|_{-\beta}\\\nonumber
&  \leq  c (1+  \|  w(s)\|_{\alpha}+ \|  z(s)\|_{\alpha}+2\|v_0 \|_{\alpha})  e^{-\rho s}\|w(s)-z(s)\|_\alpha \| b(s)  \|_{-\beta}\\
&  \leq c (1+  2e^{\rho_0 T}M_0 +2 M_0) \di_\rho(w, z) \| b(s)  \|_{-\beta},
\end{align}
having used in the last line the fact that $\|v_0\|_\alpha\leq M_0$ by choice of $M_0$ and that for any $w \in E^\alpha_{\rho_0, M_0}$ one has 
$$\sup_{s\in[0,T]}\|w(s)\|_\alpha = \sup_{s\in[0,T]} e^{\rho_0 s} e^{-\rho_0 s} \|w(s) \|_\alpha \leq e^{\rho_0 T} \|w\|^{(\rho_0)}_{C_T \mathcal C^\alpha}\leq e^{\rho_0 T} M_0 .$$
Plugging \eqref{eq:divGG} into \eqref{eq:Jdrho} and using the Gamma function we get
\[
\di_\rho(J(w),J(z)) \leq  c (1+  2e^{\rho_0 T}M_0 +2 M_0)\| b(s)  \|_{-\beta}    \di_\rho(w,z) \Gamma (\theta) \rho^{-\theta},
\]
hence setting  
\[
C : = c (1+  2e^{\rho_0 T}M_0 +2 M_0)\| b(s)  \|_{-\beta}     \Gamma (\theta)
\]
we conclude. 
\end{proof}

We can now state and prove  existence and uniqueness of a mild solution $v\in C_T \mathcal C^\alpha$ of \eqref{eq:FPpde} using the equivalent equation \eqref{eq:mildw}.

\begin{proposition}\label{pr:exunalpha}
Let  Assumptions \ref{ass:param-b} and \ref{ass:Ftilde} hold.  Let $v_0\in \mathcal C^\alpha$ for some $\alpha \in(\beta, 1-\beta)$.  Then there exists a unique $v\in C_T \mathcal C^\alpha$ such that \eqref{eq:mildFP} holds. 
\end{proposition}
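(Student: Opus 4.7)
The plan is to apply Banach's fixed point theorem on the complete metric space $(E^\alpha_{\rho_0, M_0}, \di_\rho)$ using the equivalent formulation $w=J(w)$ recorded in \eqref{eq:mildw}, and then to upgrade the resulting uniqueness within the ball to global uniqueness in $C_T \mathcal C^\alpha$ by means of the a priori bound in Proposition \ref{pr:priori}.

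For existence, I would first fix $\rho_0$ and $M_*$ as produced by Proposition \ref{pr:Jball}, and choose any $M_0 \geq M_*$. The set $E^\alpha_{\rho_0, M_0}$ is closed in the $\|\cdot\|^{(\rho_0)}_{C_T\mathcal C^\alpha}$-topology and, being embedded in the complete space $C_T\mathcal C^\alpha$, is itself complete with respect to every equivalent metric $\di_\rho$. By Proposition \ref{pr:Jball}, $J$ maps $E^\alpha_{\rho_0,M_0}$ into itself, and by Lemma \ref{lm:Jcontr}, for $\rho$ large enough (specifically with $C\rho^{-\theta}<1$) the map $J$ is a strict contraction on $(E^\alpha_{\rho_0,M_0},\di_\rho)$. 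Banach's fixed point theorem therefore yields a unique $w^*\in E^\alpha_{\rho_0,M_0}$ with $w^*=J(w^*)$. Setting $v^* := w^*+P_\cdot v_0$ gives an element of $C_T\mathcal C^\alpha$ (using that $P_\cdot v_0\in C_T\mathcal C^\alpha$ via Schauder's estimates and \eqref{eq:PcontC}) which, by the equivalence between \eqref{eq:mildFP} and \eqref{eq:mildw} recorded just after \eqref{eq:J}, solves the mild equation \eqref{eq:mildFP}.

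For global uniqueness, let $v\in C_T\mathcal C^\alpha$ be any mild solution of \eqref{eq:mildFP}, i.e.\ any fixed point of $I$. Proposition \ref{pr:priori} yields $\|v\|_{C_T\mathcal C^\alpha}\leq K$, with $K$ depending only on $\|v_0\|_\alpha$, $\|b\|_{C_T\mathcal C^{-\beta}}$ and $T$. Writing $w:=v-P_\cdot v_0$ and using $\|P_\cdot v_0\|_{C_T\mathcal C^\alpha}\leq c\|v_0\|_\alpha$ from Schauder's estimates, one obtains
\[
\|w\|^{(\rho_0)}_{C_T\mathcal C^\alpha} \leq \|w\|_{C_T\mathcal C^\alpha} \leq K+c\|v_0\|_\alpha.
\]
Thus, if we have chosen $M_0\geq \max\{M_*,\,K+c\|v_0\|_\alpha\}$ from the outset, every such $w$ automatically belongs to $E^\alpha_{\rho_0,M_0}$, and the uniqueness of the fixed point of $J$ in the ball forces $w=w^*$, whence $v=v^*$.

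The heavy lifting has already been done in Propositions \ref{pr:Jball} and \ref{pr:priori} and in Lemma \ref{lm:Jcontr}, so the proof itself is essentially a synthesis and presents no genuine obstacle. The only subtle point to keep in mind is that Banach's theorem yields uniqueness only within the chosen ball, so invoking the a priori bound of Proposition \ref{pr:priori} is essential to rule out solutions living outside the contraction ball; without it we would obtain only a local uniqueness statement.
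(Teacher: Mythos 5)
Your proof is correct and uses the same core mechanism as the paper (Banach's fixed point on the $\rho$-ball $E^\alpha_{\rho_0,M_0}$ via Proposition \ref{pr:Jball} and Lemma \ref{lm:Jcontr}, then translating between $w$ and $v$). The difference lies in the upgrade from uniqueness-in-the-ball to global uniqueness. You invoke the a priori bound of Proposition \ref{pr:priori} to pin down a single ball $E^\alpha_{\rho_0,M_0}$ with $M_0 \geq K + c\|v_0\|_\alpha$ that is guaranteed to contain every mild solution. The paper avoids Proposition \ref{pr:priori} entirely: given two candidate solutions $w^1, w^2 \in C_T\mathcal C^\alpha$, it simply sets $M_i := \|w^i\|^{(\rho_0)}_{C_T\mathcal C^\alpha}$ and enlarges the ball \emph{a posteriori} by taking $M_0 \geq \max\{M_1, M_2, M_*\}$; since Proposition \ref{pr:Jball} and Lemma \ref{lm:Jcontr} hold for any $M_0 \geq M_*$ (with $\rho$ chosen accordingly large), both candidates land in a common contraction ball and hence coincide. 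Both routes are sound; yours has the merit of producing a single ball uniform over all solutions, while the paper's is a little leaner (one fewer ingredient). One small correction to your closing remark: the a priori bound is \emph{not} essential for the upgrade to global uniqueness — the paper's post-hoc enlargement of $M_0$ is a perfectly adequate alternative.
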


\begin{proof}
We show existence and uniqueness of $w$ solution of \eqref{eq:mildw} because this is equivalent to existence and uniqueness of a mild solution $v\in C_T\mathcal C^\alpha$ to \eqref{eq:FPpde}.

To show existence,  let $\rho_0, M_0$ be chosen according to Proposition \ref{pr:Jball}. We observe that $J$ is a contraction  in $E^{\alpha}_{\rho_0, M_0} \subset C_T \mathcal C^\alpha$  by Lemma \ref{lm:Jcontr}, hence  by Banach's fixed point theorem there exists a unique solution to $w=J(w)$ in $E^{\alpha}_{\rho_0, M_0} \subset C_T\mathcal C^\alpha$.

To show uniqueness, let $w^1,w^2\in C_T \mathcal C^\alpha$ be any two  solutions of \eqref{eq:mildw}. Let $\rho_0$ be chosen according to Proposition \ref{pr:Jball}. Then we  set $M_i:= \|w^i\|_{C_T\mathcal C^\alpha}^{(\rho_0)}$ and  we choose $M_0\geq \max\{M_1, M_2, M_*\}$, with $M_*$ from Proposition \ref{pr:Jball}, so that $w^i\in E^\alpha_{\rho_0, M_0}$. Thus by the contraction property of $J$ we have uniqueness, hence $w^1=w^2$.
\end{proof}

\begin{theorem}\label{thm:esistunic}
Let Assumptions \ref{ass:param-b}, \ref{ass:Ftilde} and \ref{ass:v0} hold.  Then there exists a unique mild solution $v\in C_T \mathcal C^{\beta+}$ of \eqref{eq:FPpde}.
\end{theorem}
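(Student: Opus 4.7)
The strategy is to reduce Theorem \ref{thm:esistunic} to the fixed regularity statement in Proposition \ref{pr:exunalpha}, by carefully choosing a single exponent $\alpha\in(\beta,1-\beta)$ that simultaneously lies below the regularity of $v_0$ and below the regularity of any candidate solution. All the analytic work (contraction, Schauder/Bernstein estimates, pointwise product) is already packaged inside Proposition \ref{pr:exunalpha}; what remains is essentially bookkeeping about inductive limits.

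For existence, I would proceed as follows. By Assumption \ref{ass:v0}, $v_0\in \mathcal{C}^{\beta+}$, so there exists $\alpha_0>\beta$ with $v_0\in\mathcal{C}^{\alpha_0}$. Pick $\alpha\in(\beta,1-\beta)$ such that $\alpha\le\alpha_0$ (possible since $\beta<1/2$, so $(\beta,1-\beta)$ is a nonempty open interval, and if $\alpha_0\ge 1-\beta$ we just pick any $\alpha$ in the interval and use the embedding $\mathcal{C}^{\alpha_0}\subset\mathcal{C}^\alpha$). Then $v_0\in\mathcal{C}^\alpha$, so Proposition \ref{pr:exunalpha} produces a mild solution $v\in C_T\mathcal{C}^\alpha$ of \eqref{eq:mildFP}. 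Since $\alpha>\beta$, we have $v\in C_T\mathcal{C}^{\beta+}$, giving existence.

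For uniqueness in $C_T\mathcal{C}^{\beta+}$, let $v^1,v^2\in C_T\mathcal{C}^{\beta+}$ be two mild solutions. By the characterization of $C_T\mathcal{C}^{\beta+}$ recalled in Section \ref{sc:prelim} (Lemma \ref{lm:inductive}), there exist $\alpha_1,\alpha_2>\beta$ with $v^i\in C_T\mathcal{C}^{\alpha_i}$. Now choose
\[
\alpha \in \bigl(\beta,\min\{\alpha_1,\alpha_2,\alpha_0,1-\beta\}\bigr),
\]
which is nonempty since the minimum is strictly greater than $\beta$. By the Besov embedding $\mathcal{C}^{\alpha_i}\subset\mathcal{C}^\alpha$ we have $v^i\in C_T\mathcal{C}^\alpha$, and likewise $v_0\in\mathcal{C}^\alpha$. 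Both $v^1,v^2$ satisfy \eqref{eq:mildFP} in this common space, so the uniqueness part of Proposition \ref{pr:exunalpha} forces $v^1=v^2$.

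The main (minor) obstacle is the regularity matching: one needs $\alpha$ to lie in $(\beta,1-\beta)$ \emph{simultaneously} with the a priori regularities of $v_0$ and of the two candidate solutions, and one needs to invoke the inductive-limit characterization of $C_T\mathcal{C}^{\beta+}$ to extract a common exponent. No new estimates are required beyond Proposition \ref{pr:exunalpha}.
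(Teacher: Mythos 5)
Your proposal is correct and follows essentially the same route as the paper: reduce to the fixed-regularity Proposition \ref{pr:exunalpha} by choosing a suitable common exponent $\alpha\in(\beta,1-\beta)$, using the inductive-limit characterization (Lemma \ref{lm:inductive}) and the embedding $\mathcal C^{\alpha'}\subset\mathcal C^{\alpha}$ for $\alpha<\alpha'$. If anything you are slightly more explicit than the paper in truncating the exponent below $1-\beta$ and in tracking the regularity of $v_0$, but the argument is identical in substance.
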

\begin{proof}
{\em Existence.} Since $v_0 \in \mathcal C^{\beta+}$ by assumption, there exists $\alpha\in(\beta, 1-\beta)$ such that $v_0 \in \mathcal C^{\alpha}$. With such $\alpha$ by Proposition \ref{pr:exunalpha} we know that there exists a (unique) mild solution in $ C_T \mathcal C^{\alpha}$.

{\em Uniqueness.} Given two solutions $v^1, v^2 \in  C_T \mathcal C^{\beta+}$ there exist $\alpha_1, \alpha_2$ such that  $v^i \in C_T \mathcal C^{\alpha^i}$ for $i=1,2$. Then choosing $\alpha= \min\{\alpha^1, \alpha^2\}$ we have that  $v^i \in C_T \mathcal C^{\alpha}$ for $i=1,2$ and by uniqueness in $ C_T \mathcal C^{\alpha}$ from Proposition \ref{pr:exunalpha} we have that $v^1=v^2$.
\end{proof}

\begin{remark}
Notice that if we suppose that $v_0\in\mathcal C^{(1-\beta)-}$  in place of Assumption \ref{ass:v0} one gets that a solution $v$ exists in $ C_T \mathcal C^{(1-\beta)-} $. 
\end{remark}

\section{The regularised PDE and its limit}\label{sc:regularisedPDE}

Let Assumptions \ref{ass:param-b}, \ref{ass:F}, \ref{ass:Ftilde} and \ref{ass:v0}   hold throughout  this section.

  We consider the sequence $b^n$ introduced in  Proposition \ref{pr:L24}.
 When the term $b$ is replaced  by $b^n$, with fixed $n$, then  we get a smoothed PDE, that is,  we get  the Fokker-Planck equation 
\begin{equation}\label{eq:FPpden}
\left\{
\begin{array}{l}
\partial_t v^n = \frac12 \Delta v^n -\text{div}(\tilde F( v^n)b^n)\\
v^n(0) = v_0,
\end{array} \right.
\end{equation}
where we recall that  $\tilde F(v^n) = v^n F(v^n)$. For ease of reading, we recall that the mild solution of \eqref{eq:FPpden} is given by an element $v^n\in C_T \mathcal C^{\beta+}$ such that 
\begin{equation}\label{eq:mildvn}
v^n(t)  = P_t v_0 - \int_0^t  P_{t-s} [\text{div} (\tilde F(v^n(s)) b^n(s) ) ]  \di s.
\end{equation}

\begin{remark}\label{rm:vn}
We observe that, since $b^n \in  C_T \mathcal C^{(-\beta)+}$, then all results from Section \ref{sc:FPpde} are still valid, in particular the bound from Proposition \ref{pr:priori} and the existence and uniqueness result from Theorem \ref{thm:esistunic} still apply to \eqref{eq:FPpden}.
\end{remark}

At this point we introduce the notation and some useful results on a very similar semilinear PDE studied in \cite{lieber-oudjane-russo}. 
We consider the  PDE 
\begin{equation}\label{eq:FPpdeR}
\left\{
\begin{array}{l}
\partial_t u(t,x) = \frac12 \Delta u(t,x) -\text{div}(u(t,x) \mathfrak b(t,x,u(t,x))  )\\
u(0,x) = v_0( x),
\end{array} \right.
\end{equation} 
where $v_0$ is a bounded Borel function. We set 
\begin{equation}\label{eq:b}
\mathfrak b(t,x,z) := F(z)b^n (t,x).
\end{equation} 
Thanks to Assumptions \ref{ass:F} and properties of $b^n$ stated in Proposition \ref{pr:L24} item (i) we have that  the  term $\mathfrak b(t, x, z)$ is  uniformly bounded.
Below we recall a mild-type solution, introduced in \cite{lieber-oudjane-russo}, which we call here semigroup solution. We will show that  any semigroup solution is also a mild solution in Proposition \ref{pr:mild=}.
\begin{definition}
We will call a {\em semigroup  solution} of the PDE \eqref{eq:FPpdeR} a function $u\in L^\infty([0,T]\times \R^d)$ that satisfies the integral equation
\begin{align}\label{eq:mildR}
\nonumber
u(t,x) = & \int_{\R^d} p_t(x- y)  v_0(y) \mathrm dy \\
& - \sum_{j=1}^d \int_0^t \int_{\R^d} u(s,y)\mathfrak b_j(s, y, u(s, y)) \partial_{j} p_{t-s}(x-y) \mathrm dy \,\mathrm ds ,
\end{align}
where  $p$ is the Gaussian heat kernel introduced  in \eqref{eq:pt}.
\end{definition}
Notice that this definition is inspired by \cite[Definition 6]{lieber-oudjane-russo}, but we mo\-di\-fied it here to include the condition $u\in L^\infty([0,T]\times \R^d)$, rather than $u\in L^1([0,T]\times \R^d)$ (the latter as in \cite{lieber-oudjane-russo}, where moreover the solution is called `mild solution'). Indeed integrability of $u$ is sufficient for the integrals in the {semigroup solution} to make sense, because $\mathfrak b$ is also bounded and the heat kernel and its derivative are integrable. 

The first result we have on \eqref{eq:FPpdeR} is about uniqueness of the {semigroup solution} in $L^\infty([0,T]\times \R^d)$. This result is not included in \cite{lieber-oudjane-russo}, but we were inspired by proofs therein, in particular by the proof of \cite[Lemma 20]{lieber-oudjane-russo}.

\begin{lemma}\label{lm:Linfty}
 There exists at most one {semigroup  solution} of \eqref{eq:FPpdeR}.
\end{lemma}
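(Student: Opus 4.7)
The strategy is to take the difference of two candidate solutions, estimate it via the Lipschitz structure of $\tilde F$, boundedness of $b^n$, and integrability of $\nabla p$, then close with a singular Gronwall. Let $u^1,u^2\in L^\infty([0,T]\times\mathbb R^d)$ be two semigroup solutions. Since both satisfy \eqref{eq:mildR} with the same initial term $\int p_t(x-y)v_0(y)\,\mathrm dy$, their difference $w:=u^1-u^2$ obeys, after rewriting $u\,\mathfrak b_j(s,y,u)=\tilde F_j(u)\, b^n_j(s,y)$,
\[
w(t,x)=-\sum_{j=1}^d \int_0^t\!\int_{\mathbb R^d}\!\bigl[\tilde F_j(u^1(s,y))-\tilde F_j(u^2(s,y))\bigr]\, b^n_j(s,y)\,\partial_j p_{t-s}(x-y)\,\mathrm dy\,\mathrm ds.
\]

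Next I would estimate pointwise. By Assumption \ref{ass:Ftilde}, $\tilde F$ is globally Lipschitz, say with constant $L$, and by Proposition \ref{pr:L24}(i), $b^n$ is bounded, say $\|b^n\|_\infty\leq C_n$. A standard computation for the heat kernel gives $\int_{\mathbb R^d}|\partial_j p_{t-s}(x-y)|\,\mathrm dy \leq c(t-s)^{-1/2}$, where $c$ does not depend on $x$. Combining these,
\[
|w(t,x)|\leq L\, C_n \sum_{j=1}^d \int_0^t \!\int_{\mathbb R^d}\! |w(s,y)|\,|\partial_j p_{t-s}(x-y)|\,\mathrm dy\,\mathrm ds \leq C\int_0^t (t-s)^{-1/2}\|w(s,\cdot)\|_\infty\,\mathrm ds,
\]
for some constant $C=C(L,C_n,d)$. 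Taking the supremum over $x\in\mathbb R^d$ yields
\[
\|w(t,\cdot)\|_\infty \leq C\int_0^t (t-s)^{-1/2}\|w(s,\cdot)\|_\infty\,\mathrm ds, \qquad t\in[0,T].
\]

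Finally, the fractional Gronwall inequality recalled in Appendix \ref{app:frGronwall} (with kernel singularity $1/2<1$) applied to the non-negative bounded function $t\mapsto\|w(t,\cdot)\|_\infty$ forces $\|w(t,\cdot)\|_\infty=0$ for every $t\in[0,T]$, hence $u^1=u^2$.

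I do not foresee any serious obstacle: the only subtle point is ensuring the absolute convergence of the double integral defining $w$, which is immediate since $u^i$ are bounded, $\mathfrak b$ is bounded (thanks to Assumption \ref{ass:F} and boundedness of $b^n$), and $\nabla p_{t-s}$ is in $L^1_y$ uniformly in $x$ with the stated $(t-s)^{-1/2}$ bound. The key structural fact being exploited is precisely that regrouping $u\mathfrak b_j(\cdot,u)$ as $\tilde F_j(u)b^n_j$ converts the quasilinear nonlinearity into a globally Lipschitz one in $u$, which is what makes the Gronwall step work.
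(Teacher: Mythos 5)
Your proof is correct and follows essentially the same route as the paper's own argument: bound $|\partial_j p_{t-s}|$ by $C(t-s)^{-1/2}$ times an integrable Gaussian, use that $z\mapsto z\,\mathfrak b(t,x,z)=(\tilde F(z)b^n(t,x))$ is Lipschitz in $z$ uniformly in $(t,x)$, and close with the fractional Gronwall inequality of Appendix \ref{app:frGronwall}. No gaps.
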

\begin{proof}
First of all we remark that since $p_t(y)$ is the heat kernel then we have two positive constants $c_p, C_p $ such that 
\begin{equation}\label{eq:pq}
|\partial_{y_j} p_t(y)| \leq \frac {C_p}{\sqrt t } q_t(y),
\end{equation}
for all $j=1, \ldots, d$, where $q_t(y) = \left(\frac{c_p}{ t \pi} \right)^{d/2} e^{-c_p \frac{|y|^2}{t}}$ is a Gaussian probability density. 

Let us consider two {semigroup solutions} $u_1, u_2$ of \eqref{eq:FPpdeR}.
We  denote by $\Pi(u)$ the {semigroup solution} map, which is the right-hand side of \eqref{eq:mildR}. Notice that   $v_0\in L^\infty(\R^d)$ by Assumption \ref{ass:v0},  and the function $z\mapsto z\mathfrak b(t,x,z)$ is Lipschitz, uniformly in $t,x$ because $\tilde F$ is assumed to be Lipschitz in Assumption \ref{ass:Ftilde}.  
Using  this, together with the bound \eqref{eq:pq}, for fixed $t\in(0,T]$, we get
\begin{align*}
&\|\Pi(u_1)(t,\cdot) - \Pi(u_2)(t,\cdot)\|_{\infty}\\
& = \Big \|\sum_{j=1}^d \int_0^t \int_{\R^d} \Big(u_2(s,y)\mathfrak b_j(s, y, u_1(s, y)) -u_1(s,y) \mathfrak b_j(s, y, u_2(s, y))\Big)\\
& \qquad \qquad \qquad \qquad \cdot  \partial_{j} p_{t-s}(x-y) \mathrm dy \,\mathrm ds \Big \|_\infty\\
& \leq C \int_0^t \int_{\R^d} |u_1(s,y)-u_2(s,y) |  \frac {1}{\sqrt{ t-s} } C_p q_{t-s}(x-y)  \mathrm dy \,\mathrm ds\\
& \leq C  \int_0^t  \|u_1(s,\cdot)-u_2(s,\cdot) \|_\infty  \frac {1}{\sqrt{ t-s} }\mathrm ds \cdot  \int_{\R^d}  q_{t-s}(x-y)  \mathrm dy \\
&\leq C \int_0^t  \|u_1(s,\cdot)-u_2(s,\cdot) \|_\infty  \frac {1}{\sqrt{ t-s} }\mathrm ds.
\end{align*}  
Now, by an application of a fractional Gronwall's inequality (see Lemma \ref{lm:fr-gronwall})  we conclude that $\|u_1(t,\cdot)-u_2(t,\cdot) \|_\infty \leq 0$ for all $t\in[0,T]$, so in particular we have 
$$\|u_1 - u_2\|_{L^\infty([0,T]\times \R^d)} =0,$$
 hence the { semigroup solution} is unique in  $L^\infty([0,T]\times \R^d)$.
\end{proof}

At this point we want to compare the concept of mild solution and that of {semigroup solution}. Recall that $\mathfrak b(t, x, z) =  F( z ) b^n(t,x) $ 
so  in fact PDE \eqref{eq:FPpdeR} is exactly \eqref{eq:FPpden}. First we state and prove a preparatory lemma, where  $\mathfrak f$ is vector-valued and will be taken to be $ u(t,x)\mathfrak b(t,x,u(t,x)) $ for fixed $t$ in the following result.

\begin{lemma}\label{lm:P=p}
  Let $\mathfrak f \in L^\infty (\mathbb R^d; \R^d)$,
  $ t \in (0,T]$. 
  Then
\begin{equation} \label{EP=p}
P_t (\text{div } \mathfrak f )= \sum_{j=1}^d  \int_{\R^d}  \mathfrak f_j (y) \partial_j p_t(\cdot - y  )\mathrm  dy,
\end{equation}
almost everywhere.
\end{lemma}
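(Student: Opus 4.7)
The plan is to verify identity \eqref{EP=p} by testing both sides against an arbitrary Schwartz function and showing that the resulting pairings agree, using Fubini's theorem and an integration by parts. Since the right-hand side will turn out to be a bounded continuous function (being the convolution of an $L^\infty$ field with the integrable kernel $\partial_j p_t$, for $t>0$), equality as distributions will yield equality almost everywhere.

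First I would set $g(x):=\sum_{j=1}^d \int_{\R^d} \mathfrak f_j(y) \partial_j p_t(x-y)\,\di y$ and observe that $g$ is well-defined and in $L^\infty(\R^d)$ because $\mathfrak f\in L^\infty$ and $\partial_j p_t\in L^1(\R^d)$ for $t>0$. Then, for $\varphi\in\mathcal S(\R^d)$, I would compute by Fubini (the integrand is dominated by $\|\mathfrak f\|_\infty|\partial_j p_t(x-y)||\varphi(x)|$, which is integrable on $\R^d\times\R^d$):
\[
\langle g,\varphi\rangle
= \sum_{j=1}^d \int_{\R^d} \mathfrak f_j(y)\left(\int_{\R^d}\partial_{x_j} p_t(x-y)\varphi(x)\,\di x\right)\di y.
\]
An integration by parts in $x$, together with the symmetry $p_t(x-y)=p_t(y-x)$, gives the inner integral as $-(P_t\partial_j\varphi)(y)$, hence
\[
\langle g,\varphi\rangle = -\sum_{j=1}^d \int_{\R^d}\mathfrak f_j(y)(P_t\partial_j\varphi)(y)\,\di y.
\]

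Next I would compute the left-hand side by duality, using $\langle P_t \psi,\varphi\rangle=\langle \psi,P_t\varphi\rangle$ for $\psi\in\mathcal S'$ and $\varphi\in\mathcal S$ (recalled just before Lemma \ref{lm:heat}). Since $\mathfrak f\in L^\infty\subset\mathcal S'$ componentwise and $\operatorname{div}\mathfrak f$ is defined distributionally,
\[
\langle P_t(\operatorname{div}\mathfrak f),\varphi\rangle
= \langle \operatorname{div}\mathfrak f,P_t\varphi\rangle
= -\sum_{j=1}^d\int_{\R^d}\mathfrak f_j(y)\partial_j(P_t\varphi)(y)\,\di y,
\]
using that $P_t\varphi\in\mathcal S$ (recalled before Lemma \ref{lm:heat}) so the standard integration-by-parts identity against the $L^\infty$ field $\mathfrak f$ is legitimate. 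Finally, because $p_t$ is a translation-invariant convolution kernel, $\partial_j$ commutes with $P_t$ on Schwartz functions, i.e.\ $\partial_j P_t\varphi=P_t\partial_j\varphi$, which shows that the two pairings coincide. Hence $P_t(\operatorname{div}\mathfrak f)=g$ in $\mathcal S'$, and as $g\in L^\infty$ the equality \eqref{EP=p} holds a.e.

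I expect the only mildly delicate point to be the justification of the two applications of Fubini and of integration by parts, but both are routine because $t>0$ ensures $p_t$ and $\partial_j p_t$ are Schwartz functions, while $\mathfrak f$ is merely essentially bounded; there are no boundary terms since $p_t(x-\cdot)\in\mathcal S$ for each fixed $x$. No other nontrivial obstruction arises.
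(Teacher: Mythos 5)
Your proof is correct, but it takes a genuinely different route from the paper. The paper proves the identity by taking the Fourier transform of both sides in $\mathcal S'$: on the left it uses $\mathcal F(p_t \ast \operatorname{div}\mathfrak f)=\mathcal F(p_t)\,\mathcal F(\operatorname{div}\mathfrak f)=\sum_j i\xi_j\,\mathcal F(p_t)\,\mathcal F(\mathfrak f_j)$, on the right it uses $\mathcal F(\mathfrak f_j\ast\partial_j p_t)=\mathcal F(\mathfrak f_j)\,i\xi_j\,\mathcal F(p_t)$, and observes the two coincide (with the caveat that the products are classical products of a tempered distribution with a Schwartz function). You instead pair both sides with a test function, use Fubini and integration by parts to reduce each pairing to $-\sum_j\int \mathfrak f_j\,\partial_j(P_t\varphi)$, and invoke $\partial_j P_t\varphi=P_t\partial_j\varphi$. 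Your approach is more elementary in that it avoids the convolution theorem for $\mathcal S'\ast\mathcal S$, at the cost of having to justify Fubini and the absence of boundary terms (both routine here, as you note). It also has the advantage of explicitly establishing that the right-hand side of \eqref{EP=p} is a bounded (indeed continuous) function, which is what makes the ``almost everywhere'' conclusion meaningful — a point the paper's Fourier argument leaves implicit. Both arguments are valid.
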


\begin{proof} 
  We will show that the left-hand side (LHS) and the right-hand side
  (RHS) are the
  same object in $\mathcal S'$.  
     Notice that the heat kernel $p_t(x)$ is the same kernel associated to the semigroup $P_t$, namely if $\phi\in\mathcal S$, then $P_t \phi \in \mathcal S$ with $P_t \phi(x) = \int_{\R^d} p_t(x-y) \phi(y) \di y $. 
We now take the Fourier transform $\mathcal F$ in $\mathcal S'$  of both sides. 
The LHS gives
\begin{align*}
\mathcal F( P_t (\text{div } \mathfrak f )) &= \mathcal F( p_t \ast (\text{div } \mathfrak f ))\\
&= \mathcal F( p_t) \mathcal F  (\text{div } \mathfrak f )\\
&= \sum_{j=1}^d \mathcal F( p_t)  i \xi_j \mathcal F  (  \mathfrak f_j ).
\end{align*}
The RHS of \eqref{EP=p}, on the other hand, gives
\begin{align*}
\mathcal F(\sum_{j=1}^d  \int_{\R^d}  \mathfrak f_j (y) \partial_{j} p_t(\cdot - y) \mathrm dy )& = \sum_{j=1}^d \mathcal F(  \mathfrak f_j \ast \partial_{j} p_t )
\\
& = \sum_{j=1}^d \mathcal F(  \mathfrak f_j) \mathcal F( \partial_{j} p_t )\\
& = \sum_{j=1}^d \mathcal F(  \mathfrak f_j) i\xi_j \mathcal F(  p_t ).
\end{align*}
Notice that one should be careful that the products appearing above are classical products of an element of $\mathcal S'$ (like $\mathcal F f_j$) and an element of $\mathcal S$ (like $\xi \mapsto i \xi_j \mathcal F (p_t)(\xi)$).
\end{proof}

We are now ready to prove that any mild solution  is  a  semigroup solution.

\begin{proposition}\label{pr:mild=}
Any  mild solution $v^n$  of \eqref{eq:FPpden}  
 is  a  semigroup solution. 
\end{proposition}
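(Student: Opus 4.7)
The plan is to start from the mild formulation \eqref{eq:mildvn} and manipulate the integrand so that the semigroup acting on a distributional divergence becomes a classical convolution integral against a derivative of the heat kernel. The main tool is Lemma \ref{lm:P=p}, and the key point is to check its hypothesis, namely that the vector field being differentiated is in $L^\infty(\R^d;\R^d)$.

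First I would unpack the integrand. For fixed $s\in[0,T]$, set
\[
\mathfrak f(s,y) := \tilde F(v^n(s,y))\, b^n(s,y) \in \R^d.
\]
Using the definitions $\tilde F(z) = z F(z)$ and $\mathfrak b(s,y,z) = F(z)\,b^n(s,y)$ from \eqref{eq:b}, a componentwise identification gives $\mathfrak f_j(s,y) = v^n(s,y)\,\mathfrak b_j(s,y,v^n(s,y))$, so the kernel in the semigroup solution \eqref{eq:mildR} is exactly $\mathfrak f_j$. I would then verify $\mathfrak f(s,\cdot)\in L^\infty(\R^d;\R^d)$: since $v^n\in C_T\mathcal C^{\beta+}\subset C_T\mathcal C^\beta$, the function $v^n(s,\cdot)$ is bounded uniformly in $s$; combined with boundedness of $F$ (Assumption \ref{ass:F}) this gives $\tilde F(v^n(s,\cdot))$ bounded; and $b^n(s,\cdot)$ is bounded by Proposition \ref{pr:L24}(i). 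Hence $\mathfrak f(s,\cdot)\in L^\infty(\R^d;\R^d)$ with a bound that is uniform in $s\in[0,T]$.

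Next, applying Lemma \ref{lm:P=p} to $\mathfrak f(s,\cdot)$ with parameter $t-s$ yields, for a.e.\ $x$,
\[
P_{t-s}\bigl[\text{div}(\tilde F(v^n(s))\,b^n(s))\bigr](x)
= \sum_{j=1}^d \int_{\R^d} v^n(s,y)\,\mathfrak b_j(s,y,v^n(s,y))\,\partial_j p_{t-s}(x-y)\,\mathrm dy.
\]
Also, since $v_0$ is bounded, $P_t v_0(x) = \int_{\R^d}p_t(x-y)v_0(y)\,\mathrm dy$ classically. Substituting both identities into \eqref{eq:mildvn} and exchanging the time and space integrals (justified by Fubini, using the uniform bound on $\mathfrak f$ together with the integrability of $\partial_j p_{t-s}$ and the factor $(t-s)^{-1/2}$ from \eqref{eq:pq}) produces exactly the right-hand side of \eqref{eq:mildR} with $u=v^n$, for a.e.\ $x$.

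The last detail is to upgrade the almost-everywhere identity to an everywhere identity: both sides of the semigroup formulation are continuous in $x$ (for fixed $t$) because $v^n(t)\in\mathcal C^{\beta+}$ is H\"older continuous and the convolutions against $p_t$ and $\partial_j p_{t-s}$ preserve continuity under the boundedness just established. The main (mild) obstacle is really the bookkeeping in applying Lemma \ref{lm:P=p} pointwise in $s$ and integrating; no genuine analytical difficulty arises once the $L^\infty$ bound on $\mathfrak f$ is in place.
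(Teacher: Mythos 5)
Your proof is correct and follows essentially the same route as the paper's: identify the vector field $\mathfrak f_j = v^n\,\mathfrak b_j(\cdot,\cdot,v^n)$, check it is bounded (via boundedness of $v^n$, $F$, and $b^n$), and then invoke Lemma \ref{lm:P=p} together with the elementary identity $P_t v_0 = p_t * v_0$. You are somewhat more careful than the paper in applying Lemma \ref{lm:P=p} pointwise in $s$ with time parameter $t-s$ before integrating (and in flagging the Fubini step and the a.e.-to-everywhere upgrade), but these are refinements of the same argument rather than a different approach.
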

\begin{proof}
Recall that $F(z) b^n(t,x) = \mathfrak b(t,x,z)$ by \eqref{eq:b}. 
For $v^n$ to be a semigroup solution it must be an a.e.\ 
bounded function that satisfies \eqref{eq:mildR}.
First we notice that, since $v^n$ is a mild solution, there exists $\alpha>\beta $  such that $v^n \in C_T\mathcal C^\alpha \subset L^\infty([0,T]\times \R^d)$ so the second term on the RHS of expression \eqref{eq:mildR} is well-defined.
We recall that by Assumption \ref{ass:F}, $ F$ is bounded and by
Proposition \ref{pr:L24} (i) also $b^n$ is bounded
 hence  $\mathfrak b$ is also bounded. 
Moreover by Assumption \ref{ass:v0} the initial condition $v_0 \in \mathcal C^{\beta+}\subset L^\infty ([0,T]\times \R^d) $ so also the first term  on the RHS of expression \eqref{eq:mildR} is well-defined.

Now we show that the two terms on the RHS of \eqref{eq:mildvn} are equal to
 the terms on the RHS of \eqref{eq:mildR}. We start with the initial condition
 term, which can be written as
\begin{align*}
(P_t v_0)(x) = \int_{\R^d} p_t(x- y) v_0(y) \mathrm dy,  
\end{align*}
since $p_t$ is the kernel of the semigroup $P_t$. 
For the second term we use Lemma \ref{lm:P=p} with $\mathfrak f= u \mathfrak b$ to get 
\begin{align*}
P_t (\text{div} &[u(t) F( u(t)) b^n(t,\cdot) ]) =P_t (\text{div} [u(t)\mathfrak b(t, u(t)) ])\\
&=\sum_{j=1}^d  \int_0^t \int_{\R^d}  u(s, y)   \mathfrak  b_j(s, y, u(s, y)) \partial_{j} p_t(\cdot - y  )\mathrm  dy \,\mathrm  ds
\end{align*}
and so \eqref{eq:mildR} becomes \eqref{eq:mildvn}, i.e.~the mild solution $v^n$ is also a semigroup solution. 
\end{proof}

\begin{remark}\label{rm:vnunique}
Let  $n$ be fixed. By Theorem \ref{thm:esistunic}
there is a  unique mild solution $v^n$ of \eqref{eq:FPpden} in $C_T \mathcal C^{\beta+}$.
\end{remark}

The next result establishes, in particular, the uniqueness of  the solution $v$ in  $C_T \mathcal C^{\beta+}$ and a continuity result with respect to $b\in C_T\mathcal C^{-\beta}$.

\begin{proposition}\label{pr:unique}
  \begin{itemize}
  \item[(i)] Let $b^1, b^2$ satisfy Assumption \ref{ass:param-b}. Let  
  $v^1$ (resp.\ $v^2$) be a  mild solution of \eqref{eq:FPpde} with $b=b^1$ (resp.\ $b=b^2$).
  For any   $\alpha\in (\beta,1-\beta)$ such that $v^1,v^2\in C_T \mathcal C^\alpha$, there exists a function
    $\ell_\alpha:\R^+\times \R^+\to \R^+$, increasing in the second variable,
     such that 
\[
\|v^1(t) - v^2(t)\|_{\alpha}  \leq \ell_\alpha(\|v_0\|_\alpha, \|b^1\| \vee\|b^2\|  )  \|  b^1 - b^2 \|_{C_T\mathcal C^{-\beta}},
\]
 for all $t\in[0,T]$.
\item[(ii)]  Let  $(b^m)_m$ be a sequence in $C_T \mathcal C^{(-\beta)+}$. Let $v^m$ be a mild solution of   \eqref{eq:FPpde} with $b=b^m$ and $v$ be a mild solution of \eqref{eq:FPpde}.
If $b^m\to b$ in  $C_T \mathcal C^{-\beta}$ then $v^m \to v$ in $C_T \mathcal C^{\beta+}$.
\end{itemize} 
\end{proposition}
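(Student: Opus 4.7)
\textbf{Plan for the proof of Proposition \ref{pr:unique}.}

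For part (i), the plan is to subtract the two mild formulations and control the difference using Schauder's and Bernstein's estimates, Bony's pointwise product bound, together with the local Lipschitz behaviour of $\tilde F$ from Lemma \ref{lm:F} and the a priori bound from Proposition \ref{pr:priori}. First I would write
\[
v^1(t)-v^2(t) = -\int_0^t P_{t-s}\bigl[\mathrm{div}\bigl(\tilde F(v^1(s))b^1(s)-\tilde F(v^2(s))b^2(s)\bigr)\bigr]\,\di s,
\]
and split the integrand as
\[
\tilde F(v^1(s))b^1(s)-\tilde F(v^2(s))b^2(s) = [\tilde F(v^1(s))-\tilde F(v^2(s))]b^1(s) + \tilde F(v^2(s))[b^1(s)-b^2(s)].
\]
Applying in order Schauder's estimate \eqref{eq:Pt} with $\theta=\tfrac{\alpha+\beta+1}{2}$, Bernstein's inequality \eqref{eq:nabla} to remove the divergence, Bony's bound \eqref{eq:bony} (available since $\alpha-\beta>0$), and Lemma \ref{lm:F}, the two pieces are controlled by
\[
c(1+\|v^1(s)\|_\alpha^2+\|v^2(s)\|_\alpha^2)^{1/2}\|v^1(s)-v^2(s)\|_\alpha\|b^1(s)\|_{-\beta} \quad \text{and} \quad c(1+\|v^2(s)\|_\alpha)\|b^1(s)-b^2(s)\|_{-\beta}.
\]

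The key ingredient is then Proposition \ref{pr:priori}, which gives a constant $K=K(\|v_0\|_\alpha,\|b^1\|\vee\|b^2\|,T)$, increasing in $\|b^1\|\vee\|b^2\|$, such that $\|v^i\|_{C_T\mathcal C^\alpha}\le K$. Substituting these bounds yields
\[
\|v^1(t)-v^2(t)\|_\alpha \leq C_K \|b^1-b^2\|_{C_T\mathcal C^{-\beta}}\int_0^t(t-s)^{-\frac{\alpha+\beta+1}{2}}\di s + C_K\|b^1\|_{C_T\mathcal C^{-\beta}}\int_0^t(t-s)^{-\frac{\alpha+\beta+1}{2}}\|v^1(s)-v^2(s)\|_\alpha\,\di s,
\]
where $C_K$ is an increasing function of $K$. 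Since the exponent $\eta:=\tfrac{1-\alpha-\beta}{2}$ is strictly positive under Assumption \ref{ass:param-b} and the restriction $\alpha\in(\beta,1-\beta)$, the fractional Gronwall inequality (Lemma \ref{lm:fr-gronwall}) closes the loop and produces the function
\[
\ell_\alpha(\|v_0\|_\alpha,R) := C_K T^\eta \,E_\eta\bigl(C_K R\,\Gamma(\eta)T^\eta\bigr),
\]
evaluated at $R=\|b^1\|\vee\|b^2\|$, which is manifestly increasing in its second argument because $K$, $C_K$ and $E_\eta$ all are.

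For part (ii), since $v_0\in\mathcal C^{\beta+}$ by Assumption \ref{ass:v0}, fix one $\alpha\in(\beta,1-\beta)$ with $v_0\in\mathcal C^\alpha$; Proposition \ref{pr:exunalpha} then places both $v$ and every $v^m$ in $C_T\mathcal C^\alpha$, and uniqueness in $C_T\mathcal C^{\beta+}$ (Theorem \ref{thm:esistunic}) identifies them with the given solutions. Since $b^m\to b$ in $C_T\mathcal C^{-\beta}$, the sequence $\|b^m\|_{C_T\mathcal C^{-\beta}}$ is bounded, say by $R_*$, and by monotonicity $\ell_\alpha(\|v_0\|_\alpha,\|b^m\|\vee\|b\|)\le \ell_\alpha(\|v_0\|_\alpha,R_*)$ uniformly in $m$. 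Applying part (i) to the pair $(b^m,b)$ and taking the supremum over $t\in[0,T]$ gives
\[
\|v^m-v\|_{C_T\mathcal C^\alpha}\le \ell_\alpha(\|v_0\|_\alpha,R_*)\,\|b^m-b\|_{C_T\mathcal C^{-\beta}}\longrightarrow 0,
\]
which, via Lemma \ref{lm:inductive}, delivers convergence in the inductive space $C_T\mathcal C^{\beta+}$.

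The main obstacle I expect is the careful packaging of the Lipschitz-type estimate for $\tilde F$: since Lemma \ref{lm:F} gives only a \emph{local} Lipschitz constant involving $\|v^i\|_\alpha$, one cannot close Gronwall without invoking the a priori bound of Proposition \ref{pr:priori}, and one must then verify that the resulting constant $\ell_\alpha$ remains monotone in $\|b^1\|\vee\|b^2\|$ so that part (ii) follows cleanly from the uniform boundedness of $(b^m)$.
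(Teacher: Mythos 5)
Your proposal is correct and follows essentially the same route as the paper: subtract the two mild formulations, split the nonlinear integrand into $[\tilde F(v^1)-\tilde F(v^2)]b^1 + \tilde F(v^2)[b^1-b^2]$, control via Schauder, Bernstein, Bony's product estimate and Lemma \ref{lm:F}, invoke the a priori bound of Proposition \ref{pr:priori} to turn the local Lipschitz constant into a uniform one, and close with the fractional Gronwall inequality; part (ii) then follows from part (i), the uniform boundedness of $\|b^m\|_{C_T\mathcal C^{-\beta}}$ and the monotonicity of $\ell_\alpha$. The only cosmetic difference is that the paper keeps the constants $K_1,K_2$ for $v^1,v^2$ explicit before packaging them into $\tilde\ell_\alpha$, whereas you bound both immediately by a single $K(\|v_0\|_\alpha,\|b^1\|\vee\|b^2\|,T)$, which is equivalent.
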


\begin{proof}
  {\em Item (i).} 
   Let $v^1$ (resp.\ $v^2$) be a  solution in $C_T\mathcal C^{\beta+}$ to \eqref{eq:FPpde} with $b=b^1$ (resp.\ $b=b^2$); so there exists  $\alpha\in(\beta, 1-\beta) $ such that $v^1, v^2 \in C_T\mathcal C^{\alpha}$.
We fix $t\in[0,T]$.
Using Schauder's estimates and Bernstein's inequalities,
for the difference below
we get the  bound
\begin{align}\label{eq:vnm}
\notag
\|v^1(t) &- v^2(t)\|_{\alpha} 
= \left \| \int_0^t P_{t-s}\left( \text{div} [\tilde F (v^1(s)) b^1(s)- \tilde F (v^2(s)) b^2(s)] \right)  \di s \right \|_\alpha\\ \notag
\leq &  c\int_0^t (t-s)^{-\frac{\alpha+\beta+1}2} \left\| \text{div} [\tilde F (v^1(s)) b^1(s)- \tilde F (v^2(s)) b^2(s)] \right\|_{-\beta-1}   \di s \\ 
\leq &  c\int_0^t (t-s)^{-\frac{\alpha+\beta+1}2} \left\| \tilde F (v^1(s)) b^1(s)- \tilde F (v^2(s)) b^2(s) \right\|_{-\beta}   \di s.
\end{align}
Now, in order to bound the term inside the integral we use  the mapping properties of $\tilde F$ from Lemma \ref{lm:F}, the property \eqref{eq:bony}
of the pointwise product, and the fact that $v^1$ and $v^2$ are mild solutions. We get
\begin{align*}
 &\left\| \tilde F (v^1(s)) b^1(s)- \tilde F (v^2(s)) b^2(s) \right\|_{-\beta}  \\
= & \left \| \tilde F (v^1(s)) b^1(s)- \tilde F (v^2(s)) b^1(s) + \tilde F (v^2(s)) b^1(s) - \tilde F (v^2(s)) b^2(s) \right\|_{-\beta}  \\
\leq & \left\| [\tilde F (v^1(s))- \tilde F (v^2(s)) ] b^1(s) \right \|_{-\beta}  +\left  \|\tilde F (v^2(s)) [ b^1(s) - b^2(s)] \right \|_{-\beta}  \\
  \leq &  c\left \| \tilde F (v^1(s))- \tilde F (v^2(s)) \right \|_\alpha \| b^1(s) \|_{-\beta}  + c\left \|\tilde F (v^2(s))\right \|_\alpha \|  b^1(s) - b^2(s) \|_{-\beta}  \\
\leq & c \left  (1+ \| v^1(s)\|^2_\alpha+ \| v^2(s)\|^2_\alpha \right )^{1/2} \left \| v^1(s)- v^2(s)  \right \|_\alpha \| b^1(s) \|_{-\beta} \\ 
& + c(1+\|v^2(s)\|_\alpha)  \|  b^1(s) - b^2(s) \|_{-\beta}  \\
\leq & c \left (1+ \| v^1\|^2_{C_T\mathcal C^\alpha}+ \| v^2\|^2_{C_T\mathcal C^\alpha} \right )^{1/2} \| v^1(s)- v^2(s)  \|_\alpha \| b^1 \|_{C_T\mathcal C^{-\beta}} \\
 & + c\left (1+\|v^2\|_{C_T\mathcal C^\alpha} \right )  \|  b^1 - b^2 \|_{C_T\mathcal C^{-\beta}}.
 \end{align*}
At this point we use the a priori bound $K_1$ for $v^1$ (resp.\ $K_2$ for $v^2$) found in Proposition \ref{pr:priori}, which depends on  $\|v_0\|_\alpha $ and $\|b^1\|_{C_T\mathcal C^{-\beta}} $ (resp.\ $\|b^2\|_{C_T\mathcal C^{-\beta}} $) and is increasing with respect to the latter.  Thus we get 
 \begin{align*}
 &\left\| \tilde F (v^1(s)) b^1(s)- \tilde F (v^2(s)) b^2(s) \right\|_{-\beta}     \\
&\leq c \left (1+ K_1^2+ K_2^2 \right )^{1/2} \| v^1(s)- v^2(s)  \|_\alpha \| b^1 \|_{C_T\mathcal C^{-\beta}} \\
 &  + c\left (1+K_2 \right )  \|  b^1 - b^2 \|_{C_T\mathcal C^{-\beta}}  \\
 &\leq  \tilde \ell_\alpha \left(\|v_0\|_\alpha,  \|b^1\|_{C_T\mathcal C^{-\beta}} \vee \|b^2\|_{C_T \mathcal C^{-\beta}} \right)  \| v^1(s)- v^2(s)  \|_\alpha \\
  &+   \tilde \ell_\alpha \left(\|v_0\|_\alpha, \|b^1\|_{C_T\mathcal C^{-\beta}} \vee \|b^2\|_{C_T \mathcal C^{-\beta}} \right) \|  b^1 - b^2 \|_{C_T\mathcal C^{-\beta}},
\end{align*}
where $\tilde \ell_\alpha (\cdot, \cdot) $ is a function  increasing in the second variable.
Putting this into \eqref{eq:vnm} we get
\begin{align*}
&\|v^1(t) - v^2(t)\|_{\alpha} \\
&\leq   c\, \tilde \ell_\alpha \left(\|v_0\|_\alpha, \|b^1\|_{C_T\mathcal C^{-\beta}} \vee \|b^2\|_{C_T \mathcal C^{-\beta}} \right)  \|  b^1 - b^2 \|_{C_T\mathcal C^{-\beta}} T^{\frac{1-\alpha-\beta}2} \\
&+   \tilde \ell_\alpha \left(\|v_0\|_\alpha, \|b^1\|_{C_T\mathcal C^{-\beta}} \vee \|b^2\|_{C_T \mathcal C^{-\beta}} \right) \int_0^t (t-s)^{-\frac{\alpha+\beta+1}2} \| v^1(s)- v^2(s)  \|_\alpha \di s  ,
\end{align*}
and by a generalised Gronwall's inequality (see Lemma \ref{lm:fr-gronwall}) we get
\begin{align*}
&\|v^1(t) - v^2(t)\|_{\alpha}  \\
&\leq  c\, \tilde \ell_\alpha \left(\|v_0\|_\alpha, \|b^1\|_{C_T\mathcal C^{-\beta}} \vee \|b^2\|_{C_T \mathcal C^{-\beta}} \right) 
 \|  b^1 - b^2 \|_{C_T\mathcal C^{-\beta}} T^{\frac{1-\alpha-\beta}2}\\
 &\times E_{\frac{1-\alpha-\beta}2} \left(  \tilde \ell_\alpha \left(\|v_0\|_\alpha, \|b^1\|_{C_T\mathcal C^{-\beta}} \vee \|b^2\|_{C_T \mathcal C^{-\beta}} \right) \Gamma\left(\tfrac{1-\alpha-\beta}2 \right) T^{\frac{1-\alpha-\beta}2} \right) \\
&= :   \ell_\alpha \left(\|v_0\|_\alpha, \|b^1\|_{C_T\mathcal C^{-\beta}} \vee \|b^2\|_{C_T \mathcal C^{-\beta}} \right)   \|  b^1 - b^2 \|_{C_T\mathcal C^{-\beta}},
\end{align*}
where $ \ell_\alpha (\cdot, \cdot) $ is again a function  increasing in the second variable.

{\em Item (ii).} Let $(b^m)_m$ be a sequence in $C_T\mathcal C^{(-\beta)+}$.  Let us assume that $v^m$ is the  unique solution of \eqref{eq:FPpde} with $b=b^m$ by Theorem \ref{thm:esistunic}. Moreover, by Proposition \ref{pr:exunalpha},  such $v^m$ lives in $C_T\mathcal C^\alpha$, where $\alpha$ depends only on $v_0$, hence not on $m$. Let $v$ be the unique solution of \eqref{eq:FPpde}. 
 We  apply  Item (i) with $b^1=b^m$ and $b^2 =b$ to get 
\begin{equation}\label{eq:bmb}
\|v^m(t) - v(t)\|_{\alpha} \leq  \ell_\alpha\left (\|v_0\|_\alpha, \|  b^m \|_{C_T\mathcal C^{-\beta}}\vee  \|  b \|_{C_T\mathcal C^{-\beta}} \right )  \|  b^m - b \|_{C_T\mathcal C^{-\beta}}.
\end{equation}
We have $\sup_m \|b^m\|_{C_T\mathcal C^{-\beta}}<\infty$ because $b^m\to b$ in $C_T\mathcal C^{-\beta}$,  
and 
\[
 \ell_\alpha\left (\|v_0\|_\alpha, \|  b^m \|_{C_T\mathcal C^{-\beta}} \vee  \|  b \|_{C_T\mathcal C^{-\beta}} ) \right) 
\leq \ell_\alpha \left( \|v_0\|_\alpha, \sup_m \|  b^m \|_{C_T\mathcal C^{-\beta}}\vee   \|  b \|_{C_T\mathcal C^{-\beta}} ) \right) 
\]
because $\ell_\alpha(\|v_0\|_\alpha, \cdot)$ is increasing. Therefore plugging this into \eqref{eq:bmb} we have 
\[
\|v^m(t) - v(t)\|_{\alpha} \leq  c \|b^m- b\|_{C_T\mathcal C^{-\beta}},
\]
where $c:=\ell_\alpha \left (\|v_0\|_\alpha, \sup_m \|  b^m \|_{C_T\mathcal C^{-\beta}} \vee \|  b \|_{C_T\mathcal C^{-\beta}}  \right) $. Thus taking the sup over $t$ we get that $v^m\to v$ in $C_T\mathcal C^\alpha$ if $b^m\to b$ in $C_T\mathcal C^{-\beta}$, which implies the convergence of $v^m \to v$ in  $C_T\mathcal C^{\beta+}$ because $\alpha>\beta$.
\end{proof}

\section{The regularised SDEs}\label{sc:regularisedSDE}

In this section we consider the regularised version of the McKean SDE introduced in \eqref{eq:McKean}, when $b$ is replaced by a $b^n$ defined in Proposition \ref{pr:L24}, for fixed $n$. 
We focus on the SDE 
\begin{equation}\label{eq:McKeann}
\left\{
\begin{array}{l}
X^n_t = X_0 + \int_0^t F(v^n(s, X^n_s))b^n(s, X^n_s)  \di s + W_t
\vspace{5pt}\\
v^n(t, \cdot) \text{ is the law density of }X^n_t,
\end{array}
\right.
\end{equation}
where $X_0$ is a given random variable  distributed according to $v_0$.  
In order to show existence and uniqueness of a solution of \eqref{eq:McKeann} and its link to the mild (and semigroup) solution $v^n$ of \eqref{eq:FPpden}, we make use of Theorems 12 and 13  from \cite{lieber-oudjane-russo}, as we see below.

\begin{proposition}\label{pr:MKR}
Let Assumptions  \ref{ass:F}, \ref{ass:Ftilde} and \ref{ass:udensity} hold. Let $( W_t)_{t\in[0,T]}$ be a Brownian motion on  some
given probability space. Let $b^n:[0,T]\times \R^d \to \R$ be a bounded Borel function  and let $ X_0 \sim  v_0$. 
\begin{itemize}
\item[(i)] There exists a couple $(X^n,v^n)$  with $v^n$ bounded, verifying \eqref{eq:McKeann}.
\item[(ii)] Given two solutions $(X^n,v^n)$ and $(\hat X^n,\hat v^n)$ of \eqref{eq:McKeann} with $v^n$ and $\hat v^n$ bounded, then   $(X^n,v^n)=(\hat X^n,\hat v^n)$.
\item[(iii)] If $(X^n,v^n)$ is a solution to \eqref{eq:McKeann} with $v^n$ bounded, then $v^n$ is a semigroup solution of \eqref{eq:FPpdeR}. 
\end{itemize}
\end{proposition}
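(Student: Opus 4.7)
The plan is to recognize that, for fixed $n$, equation \eqref{eq:McKeann} is a concrete instance of the class of nonlinear McKean-Vlasov SDEs treated in \cite{lieber-oudjane-russo}, and then directly invoke Theorems 12 and 13 therein. Specifically, with the drift coefficient written as $\mathfrak b(t,x,z) = F(z)\,b^n(t,x)$ as in \eqref{eq:b}, the SDE \eqref{eq:McKeann} becomes
\[
X^n_t = X_0 + \int_0^t \mathfrak b(s,X^n_s, v^n(s,X^n_s))\,\di s + W_t,
\]
whose associated Fokker-Planck equation is exactly \eqref{eq:FPpdeR} (equivalently, \eqref{eq:FPpden}).

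The verification of the hypotheses is routine. By Proposition \ref{pr:L24}(i), $b^n$ is globally bounded Borel-measurable in $(t,x)$; together with Assumption \ref{ass:F} this gives boundedness of $\mathfrak b$ in all its arguments. The key Lipschitz-type requirement of the cited theorems (that $z\mapsto z\,\mathfrak b(t,x,z)$ be Lipschitz in $z$, uniformly in $t,x$) reduces to $z\mapsto \tilde F(z) b^n(t,x)$ being Lipschitz in $z$ uniformly in $t,x$, and this follows from the global Lipschitz property of $\tilde F$ (Assumption \ref{ass:Ftilde}) together with the uniform boundedness of $b^n$. Finally, by Assumption \ref{ass:udensity}, $v_0$ is a bounded probability density, which matches the initial condition requirement in \cite{lieber-oudjane-russo}.

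With these ingredients in place, item (i) follows from \cite[Theorem 12]{lieber-oudjane-russo} (existence of a solution $(X^n,v^n)$ with $v^n$ bounded), item (ii) follows from \cite[Theorem 13]{lieber-oudjane-russo} (pathwise uniqueness among solutions with bounded density), and item (iii) is the content of the same results, which state that the density $v^n$ of any such solution satisfies the integral equation \eqref{eq:mildR}, i.e.\ it is a semigroup solution of \eqref{eq:FPpdeR}.

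The main (and essentially only) delicate point is matching our framework to that of \cite{lieber-oudjane-russo}: namely checking that our product structure $F(z)\,b^n(t,x)$ fits the more general coefficient considered there, and that our Assumptions \ref{ass:F}--\ref{ass:Ftilde} together with the properties of $b^n$ from Proposition \ref{pr:L24}(i) imply their Lipschitz/boundedness hypotheses. Once this translation is made, no further probabilistic or analytic work is required, since all three claims are obtained by direct citation.
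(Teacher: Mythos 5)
Your proposal follows essentially the same route as the paper: both recognize \eqref{eq:McKeann} as a special case of the framework in \cite{lieber-oudjane-russo} with drift $\mathfrak b(t,x,z)=F(z)b^n(t,x)$, verify boundedness and the Lipschitz hypothesis using Assumptions \ref{ass:F}--\ref{ass:Ftilde} and the boundedness of $b^n$ from Proposition \ref{pr:L24}(i), and then cite Theorems 12 and 13 of that reference for all three items. The one step you gloss over is item (iii): the cited Theorem~12 delivers that $v^n$ is a \emph{weak} solution of \eqref{eq:FPpdeR}, and the paper then invokes \cite[Assumption~C and Proposition~16]{lieber-oudjane-russo} to conclude equivalence of weak and semigroup solutions; your proposal asserts the semigroup (integral-equation) form directly without noting this extra step, which is a small but genuine gap relative to the paper's argument.
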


\begin{proof}
 We observe that \eqref{eq:McKeann} is the special case of equation (1) in \cite{lieber-oudjane-russo} when $\Lambda =0, b_0 =0 , (a_{i,j}) = I, \Phi = I$ and $\mathfrak u_0$ has a density $v_0$ with respect to the Lebesgue measure. Notice that all assumptions in Theorems 12 and 13 are satisfied. Indeed, the drift  $\mathfrak b(t,x,z): = F (z)b^n(t,x)$ is bounded and Lipschitz with respect to $z$
  because $F$ is Lipschitz and bounded by Assumption \ref{ass:F} and $b^n$ is   bounded by  Proposition \ref{pr:L24} item (i).
  
{\em Item (i).}  We apply the result  \cite[Theorem 13 point 3]{lieber-oudjane-russo}. 
In fact, the authors  forgot to emphasize 
that the $v^n$ can be chosen to be bounded  (contrary to Theorem 13 point 1 where they emphasized it).

{\em Item (ii).}
 We apply the result  \cite[Theorem 13 point 2]{lieber-oudjane-russo}. 

{\em Item (iii).} We apply the result  \cite[Theorem 12 point 1]{lieber-oudjane-russo} to get that $v^n$ is a weak solution of \eqref{eq:FPpdeR}. Under \cite[Assumption C]{lieber-oudjane-russo}\footnote{which postulates uniqueness of weak solutions for $\partial_t u = L^* u, u_0=0$ in the class of measure valued functions, which is true if $L^* =\Delta$, see \cite[Remark 7]{lieber-oudjane-russo}.}, weak and  semigroup solutions are equivalent, see \cite[Proposition 16]{lieber-oudjane-russo}.
\end{proof}

\section{Solving the McKean problem} \label{sc:McKean}

 Let  Assumptions  \ref{ass:param-b},   \ref{ass:F}, \ref{ass:Ftilde}, \ref{ass:v0} and \ref{ass:udensity}  be standing assumptions in this section. 
For ease of reading, we recall  the problem at hand, which was illustrated in \eqref{eq:McKean}. We want to solve the  McKean equation 

\begin{equation}\label{eq:McKean2}
\left\{
\begin{array}{l}
X_t = X_0 + \int_0^t F(v(s, X_s))b(s, X_s)  \di s + W_t
\vspace{5pt}\\
v(t, \cdot) \text{ is the law density of }X_t,
\end{array}
\right.
\end{equation}
for some given initial condition $X_0 \sim v_0$. 
The corresponding  Fokker-Planck singular equation (already introduced in \eqref{eq:FPpde} and recalled here for ease of reading) is  
\begin{equation}\label{eq:FPpde2}
\left\{
\begin{array}{l}
\partial_t v = \frac12 \Delta v -\text{div}(\tilde F( v)b)\\
v(0) = v_0,
\end{array}
\right.
\end{equation}
where $\tilde F(v) := v F(v)$, to which we  gave a proper meaning and which we solved in Section \ref{sc:FPpde}.

\begin{remark}
In \cite{JourMeleard} the authors investigate the propagation of chaos for McKean SDE \eqref{eq:McKean2} with smooth coefficients and initial condition, using a system of moderately interacting particles. The corresponding  system in our singular framework appears to be
\begin{align*}
\di X_t^{i,N} =& F\bigg (\frac1N \sum_{j=1}^N \phi_\epsilon(X^{j,N}_t - X^{i,N}_t) \bigg) b(t, X_t^{i,N})  \di t
+ \di W^i_t, \quad i=1, \ldots, N,
\end{align*}
 where $\phi_\epsilon$ is a mollifier converging to $\delta_0$.

 We observe that the above equations can be considered as a $dN$-dimensional SDE 
 \[
\di X_t = B(t, X_t) \di t + \di W_t,
 \]
 with singular drift $B = (B_1, B_2, \ldots, B_N)^\top$ where 
\[
B_i (t, x^1, x^2, \ldots, x^N) = F\big(\frac1N \sum_{j=1}^N \phi_\epsilon(x^{j}_t - x^{i}_t) \big) b(t, x^{i})
\]
and each $x^j\in \mathbb \R^d$. This singular SDE is well-defined using \cite{issoglio_russoMP} (see also \cite{flandoli_et.al14}) because $B(t)\in C^{(-\beta)+}(\mathbb R^{dN})$ since $b(t)\in  C^{(-\beta)+} (\mathbb R^d)$ and $ F \circ \phi_\epsilon$ is Lipschitz and bounded (since both $F$ and $\phi_\epsilon$ are Lipschitz and bounded).

We leave the study of this system and its behaviour when $N\to \infty$ to future research.  
\end{remark}

\begin{definition} \label{def:solMcK}
A  {\em solution (in law) of the  McKean problem}  \eqref{eq:McKean2} is a triple  $(X,\mathbb P, v) $ such that $\mathbb P$ is a probability measure on some measurable space $(\Omega, \mathcal F)$, the function $v$ is defined on $[0,T] \times \R^d$ and belongs to $ C_T\mathcal C^{\beta+}$, the couple $(X, \mathbb P)$ is a solution to the martingale problem with distributional drift $B(t,\cdot) :=  F(v(s, \cdot))b(s, \cdot) $, and $v(t, \cdot)$ is the law density of $X_t$.  

 We say that {\em the McKean problem \eqref{eq:McKean2}  admits uniqueness} if, whenever we have two solutions $(X, \mathbb P, v)$ and $(\hat X, \hat{\mathbb P}, \hat v)$, then $v=\hat v$ in $C_T\mathcal C^{\beta+}$ and the law of $X$ under $ \mathbb P$ equals the law of $\hat X$ under $\hat {\mathbb P}$.
\end{definition}

Using the tools developed in the previous sections, in Theorem \ref{thm:McKsol} we will construct  a solution $(X, \mathbb P, v) $ to the McKean problem \eqref{eq:McKean2} and show that this solution is  unique.
We first recall two useful results from \cite{issoglio_russoMP}. Let us consider a distributional drift $B\in C_T\mathcal C^{(-\beta)+}$ that satisfies Assumption \ref{ass:param-b} with $b=B$.

 The first result concerns  convergence in law when the distributional drift $B$ is approximated by a sequence of smooth functions $B^n$.
  This result is crucial to show existence of the McKean equation.

\begin{proposition}(Issoglio-Russo, \cite[Theorem 4.16]{issoglio_russoMP}). \label{pr:tight}
Let $B$ satisfy Assumption \ref{ass:param-b}.  Let $(B^n)$ be a sequence in $C_T\mathcal C^{(-\beta)+}$ converging to $B $ in $C_T\mathcal C^{-\beta}$. Let $(X, \mathbb P)$ (respectively $(X^n, \mathbb P^n)$) be a solution  to the (linear) MP with distributional drift $B$ (respectively $B^n$).
Then the sequence $(X^n, \mathbb P^n)$  converges in law to  $(X, \mathbb P)$. In particular, if $B^n$ is a bounded function
(which also belongs to $C_T\mathcal C^{(-\beta)+}$)
  and $X^n$ is a (strong) solution of 
\begin{equation*}
X^n_t = X_0 + \int_0^t B^n(s, X^n_s) \di s + W_t,
\end{equation*}
then $X^n$ converges to $(X, \mathbb P)$ in law.
\end{proposition}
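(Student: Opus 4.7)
The plan is to follow the classical three-step strategy for convergence of martingale problems: establish tightness of the laws of $(X^n, \mathbb P^n)$ on path space, identify any subsequential weak limit as a solution to the martingale problem with drift $B$, and invoke the uniqueness recalled at the end of Section \ref{ssc:MP} to upgrade subsequential convergence to convergence of the full sequence.

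For tightness in $C([0,T];\R^d)$, I would apply a Kolmogorov-type criterion to the increments of $X^n$. Since $B^n \to B$ in $C_T\mathcal C^{-\beta}$ we have the uniform control $\sup_n \|B^n\|_{C_T\mathcal C^{-\beta}} < \infty$, and the requisite moment bounds on $\E[|X_t^n - X_s^n|^p]$ can be extracted by testing the martingale identity \eqref{eq:MP} against carefully chosen elements of $\mathcal D_{\mathcal L^n}$ (e.g.\ the coordinate maps and their regularised powers), producing Krylov-type estimates whose constants depend only on $\|B^n\|_{C_T\mathcal C^{-\beta}}$ and hence are uniform in $n$.

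For the identification step I fix $g \in C_T\bar{\mathcal C}_c^{0+}$ together with a suitable terminal condition in $\bar{\mathcal C}_c^{(1+\beta)+}$, and produce $f^n \in \mathcal D_{\mathcal L^n}$ and $f \in \mathcal D_{\mathcal L}$ solving the associated Kolmogorov PDEs $\mathcal L^n f^n = g$ and $\mathcal L f = g$ respectively. The heart of the matter is a \emph{continuity result for the Kolmogorov PDE with respect to the drift}: the convergence $B^n \to B$ in $C_T\mathcal C^{-\beta}$ should force $f^n \to f$ in $C_T\mathcal C^{(1+\beta)+}$. This is structurally parallel to Proposition \ref{pr:unique}(ii) for the Fokker-Planck equation, and can be proved by the same fixed-point / Schauder-estimate mechanism developed in Section \ref{sc:FPpde}, exploiting the pointwise-product bound \eqref{eq:bony} to control $\nabla f^n \cdot (B^n - B)$. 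With such convergence in hand, each
\[
M_t^n := f^n(t, X_t^n) - f^n(0, X_0) - \int_0^t g(s, X_s^n)\, \di s
\]
is a local martingale under $\mathbb P^n$, and by the uniform regularity of $f^n$ and the uniform moment estimates from the tightness step, $M^n$ converges in law to the corresponding process built from $(f, X, g)$, with uniform integrability sufficient to transfer the martingale property to the limit.

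The main obstacle is precisely the continuity $f^n \to f$ for the singular Kolmogorov equation, because the $C_T\mathcal C^{(1+\beta)+}$ topology is exactly what is needed both to apply the continuous mapping theorem under the tight laws and to pass to the limit inside the pointwise product $\nabla f \cdot B$. Once this is secured, uniqueness for the MP with drift $B$ forces the full sequence $(X^n, \mathbb P^n)$ to converge to $(X, \mathbb P)$. The final \emph{in particular} clause is then immediate: a bounded Borel drift $B^n \in C_T\mathcal C^{(-\beta)+}$ yields a genuine SDE whose strong solution $X^n$ satisfies the martingale problem with drift $B^n$ (apply It\^o's formula to any $f \in \mathcal D_{\mathcal L^n}$, which is classical since $B^n$ is bounded), so the general statement applies verbatim.
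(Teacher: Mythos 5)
The paper does not actually prove this proposition; it is quoted verbatim from \cite[Theorem 4.16]{issoglio_russoMP}, so there is no in-paper proof to compare against. That said, your outline is the natural tightness--identification--uniqueness scheme one would expect, and you correctly isolate the technical crux, namely stability of the singular Kolmogorov PDE under $B^n\to B$ in $C_T\mathcal C^{-\beta}$, with the solution $f^n\to f$ in $C_T\mathcal C^{(1+\beta)+}$ via Schauder and Bony estimates just as in Proposition~\ref{pr:unique}.

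Two places where the sketch glosses over real work. First, your tightness argument via ``testing against coordinate maps and their regularised powers'' does not fit the framework: the maps $\mathrm{id}_i$ lie in $\mathcal D_{\mathcal L}^0$ with $\mathcal L(\mathrm{id}_i)=b_i$, a genuine distribution, so they are \emph{not} in the martingale-problem domain $\mathcal D_{\mathcal L}$, whose elements must solve $\mathcal L f=g$ with $g\in C_T\bar{\mathcal C}_c^{0+}$ a function. The standard device in this singular setting is a Zvonkin-type transform: one builds $\phi^n(t,\cdot)=\mathrm{id}+\text{small}$ with each component in $\mathcal D_{\mathcal L^n}$, so that $\phi^n(t,X^n_t)$ is a semimartingale with controlled coefficients, and then pulls tightness back through the (uniformly in $n$) invertible $\phi^n$. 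Uniformity of those PDE bounds in $n$ is where $\sup_n\|B^n\|_{C_T\mathcal C^{-\beta}}<\infty$ enters. Second, the closing ``in particular'' step is not quite immediate: an $f\in\mathcal D_{\mathcal L^n}$ is only in $C_T\mathcal C^{(1+\beta)+}$, hence $C^1$ but not $C^2$ in space, so classical It\^o does not apply directly even with bounded $B^n$; one must either mollify $f$ and pass to the limit using that $f$ solves $\mathcal L^n f=g$ weakly, or invoke an It\^o--Krylov / generalised It\^o formula. Also beware the inductive-space subtlety: ``$f^n\to f$ in $C_T\mathcal C^{(1+\beta)+}$'' only has content once you exhibit a single $\alpha>1+\beta$, uniform in $n$, such that the convergence holds in $C_T\mathcal C^{\alpha}$ (cf.\ Lemma~\ref{lm:inductive}); your PDE estimates must deliver that common $\alpha$.
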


The second result is the fact that the law of the solution $X$ to a (linear) martingale problem with distributional drift $B$ solves the Fokker-Planck equation in the weak sense. This result is crucial to show uniqueness of the McKean equation.

\begin{proposition}(Issoglio-Russo, \cite[Theorem 4.14]{issoglio_russoMP}).\label{pr:FPlawMP}
Let $B$ satisfy Assumption \ref{ass:param-b}. 
Let $(X,\mathbb P)$ be a solution to the martingale problem with distributional drift $B$. Let $v(t, \cdot)$ be the law density of $X_t$ and let us assume that $v\in C_T\mathcal C^{\beta+}$. Then $v$ is  a weak solution (in the sense of Definition \ref{def:wmsol} part (ii)) of the Fokker-Plank equation 
\begin{equation*}
\left\{
\begin{array}{l}
\partial_t v = \frac12 \Delta v -\text{\em div}(v B )\\
v(0) = v_0.
\end{array}
\right.
\end{equation*}
\end{proposition}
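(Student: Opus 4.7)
\textbf{Proof plan for Proposition \ref{pr:FPlawMP}.} The approach is to approximate the distributional drift $B$ by smooth drifts, apply classical It\^o's formula to the regularised SDE in order to derive the weak Fokker-Planck equation for the law density, and then pass to the limit by combining the convergence-in-law result of Proposition \ref{pr:tight} with the continuity of the Fokker-Planck solution map from Proposition \ref{pr:unique}.

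Concretely, first I would take the mollified sequence $B^n:=P_{1/n}B$ supplied by Proposition \ref{pr:L24}, so that $B^n$ is smooth, bounded together with its derivatives, and $B^n\to B$ in $C_T\mathcal C^{-\beta}$. For each $n$, let $X^n$ be the strong solution of the classical SDE driven by $W$ with smooth bounded drift $B^n$ and initial condition $X_0\sim v_0$, and let $v^n(t,\cdot)$ denote its law density. Classical parabolic regularity (using $v_0\in\mathcal C^{\beta+}$ together with the smoothness of $B^n$) should give $v^n\in C_T\mathcal C^{\beta+}$, while It\^o's formula applied to arbitrary $\varphi\in\mathcal S$ would yield the weak linear Fokker-Planck identity
\begin{equation*}
\langle \varphi, v^n(t)\rangle = \langle \varphi, v_0\rangle + \int_0^t \langle \tfrac12\Delta\varphi, v^n(s)\rangle\,\di s + \int_0^t \langle \nabla\varphi, v^n(s)B^n(s)\rangle\,\di s.
\end{equation*}
By Proposition \ref{pr:weak=mild}, $v^n$ is then also the mild solution in $C_T\mathcal C^{\beta+}$ of the regularised linear Fokker-Planck equation with drift $B^n$.

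Next I would invoke Proposition \ref{pr:unique}(ii), applied in the degenerate regime $F\equiv 1$ (so that $\tilde F(z)=z$ satisfies Assumption \ref{ass:Ftilde}), to conclude that $v^n\to \tilde v$ in $C_T\mathcal C^{\beta+}$, where $\tilde v$ is the unique mild solution in $C_T\mathcal C^{\beta+}$ of the linear Fokker-Planck equation with drift $B$ furnished by Theorem \ref{thm:esistunic}. Independently, Proposition \ref{pr:tight} gives $X^n\to X$ in law, which yields $\langle\varphi,v^n(t)\rangle\to\langle\varphi,v(t)\rangle$ for every bounded continuous $\varphi$, and in particular for every $\varphi\in\mathcal S$ and every $t\in[0,T]$. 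Uniqueness of limits then forces $v(t)=\tilde v(t)$ for all $t$, so that $v$ is a mild, and therefore by Proposition \ref{pr:weak=mild} a weak, solution of the Fokker-Planck equation with drift $B$.

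The main obstacle I expect is the identification step at the regularised level: showing that the classical density $v^n$ lies in $C_T\mathcal C^{\beta+}$ and coincides with the mild solution of the regularised equation. This is a classical parabolic Schauder argument exploiting the boundedness of all spatial derivatives of $B^n$ from Proposition \ref{pr:L24}(i), but it must be executed carefully because Proposition \ref{pr:unique}(ii) only provides continuity among mild solutions living in $C_T\mathcal C^{\beta+}$, so $v^n$ must be placed into that class before the limiting argument can be run.
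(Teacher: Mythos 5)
This proposition is cited from \cite[Theorem~4.14]{issoglio_russoMP} and is stated without proof in the present paper, so there is no ``paper's own proof'' to compare against line by line. What can be assessed is whether your reconstruction works from the tools that \emph{are} available here.

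Your outline is sound in spirit and in fact parallels the identification step the authors carry out in the proof of Theorem~\ref{thm:McKsol} (for the nonlinear McKean case). However, the step you yourself flag as the main obstacle is indeed where the argument is incomplete, and the paper's machinery points to the intended workaround. You want to place the law density $v^n$ of $X^n$ into $C_T\mathcal C^{\beta+}$ so that Proposition~\ref{pr:weak=mild} and Proposition~\ref{pr:unique}(ii) apply; your plan is to invoke a ``classical parabolic Schauder argument''. The paper avoids exactly this. Instead, the route taken in Theorem~\ref{thm:McKsol} is: (a) know that the mild solution $u^n\in C_T\mathcal C^{\beta+}$ of the regularised Fokker--Planck equation exists (Theorem~\ref{thm:esistunic}, Remark~\ref{rm:vn}); (b) show via Proposition~\ref{pr:mild=} that $u^n$ is also a \emph{semigroup} solution, hence in particular bounded; (c) show that the law density $v^n$ of $X^n$ is a bounded semigroup solution (this is what Proposition~\ref{pr:MKR}(iii) does for the McKean approximation, and the linear analogue with fixed smooth bounded drift $B^n$ is even simpler and follows from It\^o plus the weak--semigroup equivalence \cite[Proposition~16]{lieber-oudjane-russo}); and (d) invoke the $L^\infty$-uniqueness of semigroup solutions (Lemma~\ref{lm:Linfty}) to conclude $v^n=u^n$, which then automatically lives in $C_T\mathcal C^{\beta+}$. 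This circumvents any direct parabolic regularity estimate for the transition density, requiring only boundedness of $v^n$, which is far easier to justify for a diffusion with smooth bounded drift and bounded initial density. Your remaining steps (convergence $v^n\to\tilde v$ in $C_T\mathcal C^{\beta+}$ via Proposition~\ref{pr:unique}(ii) with $F\equiv 1$, $X^n\to X$ in law via Proposition~\ref{pr:tight}, and identification by uniqueness of weak limits) are correct as written. One further caveat: Proposition~\ref{pr:tight} is itself \cite[Theorem~4.16]{issoglio_russoMP}, i.e.\ from the same source as the statement you are proving; whether this introduces a circularity cannot be determined from the present paper, but it means your argument is not self-contained within this paper's proved results either.
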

We can now state and prove the main result of this paper. 

\begin{theorem}\label{thm:McKsol}
Let  Assumptions  \ref{ass:param-b},  \ref{ass:F}, \ref{ass:Ftilde}, \ref{ass:v0} and \ref{ass:udensity} hold. Then there exists a solution $(X, \mathbb P, v)$ to the McKean problem \eqref{eq:McKean2}. Furthermore, the McKean problem admits uniqueness according to Definition \ref{def:solMcK}. 
\end{theorem}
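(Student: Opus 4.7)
\emph{Proof proposal.}
The plan is to decouple the nonlinear McKean problem into two pieces already under control: the singular nonlinear Fokker--Planck PDE \eqref{eq:FPpde2} (solved uniquely in $C_T\mathcal{C}^{\beta+}$ by Theorem \ref{thm:esistunic}) and the linear singular martingale problem of Section \ref{ssc:MP} from \cite{issoglio_russoMP}. Existence is obtained by a regularisation procedure based on $b^n$ from Proposition \ref{pr:L24}: solve the regularised McKean SDE \eqref{eq:McKeann} via Proposition \ref{pr:MKR}, pass to the limit using Proposition \ref{pr:tight} for the probabilistic side and Proposition \ref{pr:unique}(ii) for the PDE side, then identify the limiting marginal density. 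Uniqueness will be obtained by showing that the density $v$ in any McKean solution must solve \eqref{eq:FPpde2} weakly (via Proposition \ref{pr:FPlawMP}), which pins it down by Theorem \ref{thm:esistunic} and reduces the problem to uniqueness of a linear MP.

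For existence, let $v\in C_T\mathcal{C}^{\beta+}$ be the unique mild solution of \eqref{eq:FPpde2} from Theorem \ref{thm:esistunic}, and let $v^n\in C_T\mathcal{C}^{\beta+}$ be the unique mild solution with $b$ replaced by $b^n$. By Proposition \ref{pr:L24}(iii) and Proposition \ref{pr:unique}(ii), $v^n\to v$ in $C_T\mathcal{C}^{\beta+}$; in particular there is $\alpha\in(\beta,1-\beta)$ with $v^n,v\in C_T\mathcal{C}^\alpha$ and $v^n\to v$ in that space. Proposition \ref{pr:MKR}(i) provides a strong solution $X^n$ of \eqref{eq:McKeann} whose marginal density is $v^n(t,\cdot)$ (identified with the regularised PDE's mild solution via Proposition \ref{pr:mild=} and Lemma \ref{lm:Linfty}). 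Setting $B^n:=F(v^n)b^n$ and $B:=F(v)b$, the key analytic step is to show $B^n\to B$ in $C_T\mathcal{C}^{-\beta}$. Splitting
\[
B^n - B = F(v^n)(b^n-b) + (F(v^n)-F(v))\,b
\]
and using \eqref{eq:bony}, the first piece is controlled by $\|F(v^n)\|_{\alpha}\|b^n-b\|_{C_T\mathcal{C}^{-\beta}}$ where $\|F(v^n)\|_\alpha\lesssim 1+\|v^n\|_\alpha$ is uniformly bounded (by Lipschitzianity and boundedness of $F$, Assumption \ref{ass:F}). For the second piece, since $F$ is merely Lipschitz one has $\|F(v^n)-F(v)\|_\infty\le L\|v^n-v\|_\infty\to 0$ together with a uniform bound in $\mathcal{C}^\alpha$; interpolating between $L^\infty$ and $\mathcal{C}^\alpha$ yields $\|F(v^n)-F(v)\|_{\alpha'}\to 0$ for some $\alpha'\in(\beta,\alpha)$, and \eqref{eq:bony} closes the estimate. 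Proposition \ref{pr:tight} then gives $X^n\to X$ in law with $(X,\mathbb{P})$ solving the linear MP with drift $B$. To identify the density of $X_t$, test against $\varphi\in C_c(\mathbb{R}^d)$: $\mathbb{E}[\varphi(X^n_t)]\to\mathbb{E}[\varphi(X_t)]$ by convergence of finite-dimensional marginals, while $\mathbb{E}[\varphi(X^n_t)]=\int\varphi v^n(t)\,\mathrm{d}x\to\int\varphi v(t)\,\mathrm{d}x$ by the uniform convergence of $v^n(t)$ on the support of $\varphi$; hence $v(t,\cdot)$ is the density of $X_t$, completing the triple $(X,\mathbb{P},v)$.

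For uniqueness, take two McKean solutions $(X,\mathbb{P},v)$ and $(\hat X,\hat{\mathbb{P}},\hat v)$. By definition $(X,\mathbb{P})$ (resp.\ $(\hat X,\hat{\mathbb{P}})$) solves the linear MP with drift $F(v)b$ (resp.\ $F(\hat v)b$), and the corresponding marginal density is $v$ (resp.\ $\hat v$), both lying in $C_T\mathcal{C}^{\beta+}$. Proposition \ref{pr:FPlawMP} then shows that $v$ and $\hat v$ are weak solutions of \eqref{eq:FPpde2}; by Proposition \ref{pr:weak=mild} they are mild solutions and Theorem \ref{thm:esistunic} forces $v=\hat v$. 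Consequently $F(v)b=F(\hat v)b$, and uniqueness of the linear MP (Theorem~4.11 of \cite{issoglio_russoMP}) yields equality in law of $X$ and $\hat X$.

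The main obstacle I anticipate is the $C_T\mathcal{C}^{-\beta}$--convergence $B^n\to B$ in the existence step: Lemma \ref{lm:F} is stated for $\tilde F$, not for $F$, so one must supply an extra interpolation argument to transfer Lipschitzianity of $F$ into $\mathcal{C}^{\alpha'}$-convergence of $F(v^n)$ to $F(v)$ for some $\alpha'>\beta$ where Bony's estimate \eqref{eq:bony} is applicable. A minor subtlety is the density identification, which requires that the $v^n(t)$ be genuine probability densities (which follows from Assumption \ref{ass:udensity} together with Proposition \ref{pr:MKR}) so that the test-function argument uniquely identifies the limit measure as absolutely continuous with density $v(t)$.
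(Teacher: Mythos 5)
Your proof is correct and follows essentially the same architecture as the paper's: regularise $b$ to $b^n$, solve the mollified McKean SDE via Proposition \ref{pr:MKR}, identify the marginal density with the mild solution of the regularised Fokker--Planck equation via Proposition \ref{pr:mild=} and Lemma \ref{lm:Linfty}, pass to the limit probabilistically via Proposition \ref{pr:tight} and analytically via Proposition \ref{pr:unique}(ii), and for uniqueness chain Propositions \ref{pr:FPlawMP}, \ref{pr:weak=mild} and Theorem \ref{thm:esistunic} to reduce to uniqueness of the linear martingale problem.

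Where you go beyond the paper is the convergence $B^n = F(v^n)b^n \to B = F(v)b$ in $C_T\mathcal{C}^{-\beta}$. The paper disposes of this tersely by invoking ``the Lipschitz property of $F$'' and Lemma \ref{lm:F}, but Lemma \ref{lm:F} is about $\tilde F$, and a merely Lipschitz $F$ does \emph{not} in general give $F(v^n)\to F(v)$ in $\mathcal C^\alpha$ itself (one can cook up counterexamples such as $F=\min(|\cdot|,1)$ applied to small smooth perturbations of $x\mapsto \mathrm{sgn}(x)|x|^\alpha$). Your interpolation argument --- uniform $\mathcal C^\alpha$-bound plus $L^\infty$-convergence yields $\mathcal C^{\alpha'}$-convergence for any $\beta<\alpha'<\alpha$, after which \eqref{eq:bony} still applies since $\alpha'-\beta>0$ --- is exactly the right way to close this, and you correctly flagged it as the delicate step. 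The other refinements (the explicit density identification by testing against $C_c$-functions, and noting that Proposition \ref{pr:MKR} only guarantees boundedness of $v^n$ so that Lemma \ref{lm:Linfty} is what pins $v^n$ down as the mild solution) are consistent with what the paper does implicitly.
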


\begin{proof}
\emph{Existence.} Let us consider the sequence $(b^n)\to b $ defined in Proposition \ref{pr:L24}. 
The corresponding smoothed McKean problem is 
\begin{equation}\label{eq:MKn}
\left\{
\begin{array}{l}
X^n_t = X_0 + \int_0^t F(v^n(s, X^n_s))b^n(s, X^n_s)  \di s + W_t,
\vspace{5pt}\\
v^n(t, \cdot) \text{ is the law density of }X^n_t.
\end{array}
\right.
\end{equation}
By Proposition \ref{pr:MKR} part (i) we have a solution $(X^n, v^n)$ of \eqref{eq:MKn} where $v^n$ is bounded and $X^n$ is a (strong)  solution  of $\di X^n = B^n(t, X^n_t) \di t + \di W_t; X^n_0=X_0$ on some fixed probability space $(\Omega, \mathcal F, \mathbb P)$, with  $B^n: = F(v^n)b^n $. By Proposition  \ref{pr:MKR} part (iii) we have that $v^n$ is a semigroup solution of \eqref{eq:FPpden}. On the other hand, we know by  Remark \ref{rm:vn} that  a mild solution $u^n$ of the same equation exists. By Proposition \ref{pr:mild=} we know that $u^n$ is a semigroup solution and moreover it is bounded (because it is a mild solution).
By uniqueness of semigroup solutions (see Lemma \ref{lm:Linfty}) we have $v^n = u^n$.

Now we notice that $B^n = F(v^n)b^n $ converges to $B:= F(v)b $ in $C_T\mathcal C^{-\beta}$ because of  \eqref{eq:bonyt},
the linearity of the pointwise product, the Lipschitz property of $F $, Lemma \ref{lm:F}, the convergence $b^n\to b$ by Proposition \ref{pr:L24} item (iii) and  the convergence $v^n\to v $ by Proposition \ref{pr:unique}.  By Lemma \cite[Lemma 4.14]{issoglio_russoMP} we know that $(X^n, \mathbb P)$ is also a solution to the MP with distributional drift $B^n$ and initial condition $X_0$, hence applying Proposition \ref{pr:tight} we have that $X^n\to X$ in law (as $B^n\to B$), and since $v^n$ is the law density of $X^n$ we have that $v$ must be the law density of $X$.

\emph{Uniqueness.}
Suppose that we have two solutions of the McKean problem \eqref{eq:McKean2}, $(X^1, \mathbb P^1, v^1)$  and $(X^2, \mathbb P^2, v^2)$.
By definition we know that $(X^i, \mathbb P^i)$ is a solution to the (linear) martingale problem with distributional drift $B^i:= F(v^i)b$. Thus by Proposition \ref{pr:FPlawMP} we have that $v^i$ is a weak solution to the Fokker-Planck equation  
\[
\left\{
\begin{array}{l}
\partial_t v^i = \frac12 \Delta v^i -\text{div}(v^i F(v^i)b )\\
v^i(0) = v_0,
\end{array}
\right.
\]
which is exactly PDE \eqref{eq:FPpde2}. Item (ii) in Proposition \ref{pr:unique} guarantees uniqueness of the mild solution of \eqref{eq:FPpde2}  and Proposition \ref{pr:weak=mild} ensures that weak and mild solutions of the Fokker-Planck equation are equivalent, hence $v^1=v^2=:v$. Note that it is crucial the fact that $v^i\in C_T \mathcal C^{\beta+}$. 
This implies that   $(X^i, \mathbb P^i)$ are both solutions of the same (linear) martingale problem with distributional drift $B:= F(v) b$, so by uniqueness of the solution of MP (see Section \ref{ssc:MP}) we conclude that  the law of $X^1$ under $ \mathbb P^1 $ equals the law of $X^2$ under $\mathbb P^2$.
\end{proof}

\appendix

\section{A generalised Gronwall's inequality}\label{app:frGronwall}

Here we recall a useful generalised Gronwall's inequality (or fractional Gronwall's inequality). For a proof see \cite[Corollary 2]{ye}.

\begin{lemma}\label{lm:fr-gronwall}
Suppose $\eta > 0$, $a(t)$ is a nonnegative function locally integrable on $0\leq t < T$ (some $T \leq \infty$) and nondecreasing on $[0,T)$. Let $g(t)$ be a nonnegative, nondecreasing continuous function defined on $0 \leq t < T$, $g(t)\leq M$ (constant), and suppose $f(t)$ is nonnegative and locally integrable on $0\leq t < T$ with
\[
f(t) \leq a(t) +g(t) \int_0^t (t-s)^{\eta-1} f(s) \di s
\]
on this interval. 
Then
\[
f(t)\leq  a(t)E_{\eta}(g(t)\Gamma (\eta)t^\eta),
\]
where $E_\eta$ is the Mittag-Leffler function defined by $E_\eta(z) =\sum_{k=0}^\infty \frac{z^k}{\Gamma(k\eta+1)}$.
\end{lemma}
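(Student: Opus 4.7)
The strategy is the classical Picard iteration of the integral inequality, producing a Neumann-type series whose general term is controlled via the Beta integral and whose sum is precisely the Mittag-Leffler function.

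First I would introduce the linear operator $B$ acting on nonnegative locally integrable functions by
\[
(B\phi)(t) := g(t)\int_0^t (t-s)^{\eta-1}\phi(s)\,\di s,
\]
so that the hypothesis reads $f \le a + Bf$ on $[0,T)$. Because $a,g,f \ge 0$ and $B$ is monotone (it preserves pointwise inequalities between nonnegative functions), a straightforward induction on $n$ gives
\[
f(t) \le \sum_{k=0}^{n-1} B^k a(t) + B^n f(t), \qquad t\in[0,T),\ n\ge 1.
\]

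The heart of the argument is the estimate
\[
B^k a(t) \le a(t)\,\frac{\bigl(g(t)\,\Gamma(\eta)\bigr)^k\,t^{k\eta}}{\Gamma(k\eta+1)}, \qquad k\ge 0,
\]
proved by induction on $k$. The base case $k=0$ is trivial. For the inductive step, using monotonicity of $a$ and $g$ (so $a(s)\le a(t)$ and $g(s)\le g(t)$ for $s\le t$) together with the Beta function identity
\[
\int_0^t (t-s)^{\eta-1}\,s^{k\eta}\,\di s \;=\; \frac{\Gamma(\eta)\,\Gamma(k\eta+1)}{\Gamma((k+1)\eta+1)}\,t^{(k+1)\eta},
\]
one obtains exactly the claimed bound with $k$ replaced by $k+1$. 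This is the step where the Mittag-Leffler coefficients $1/\Gamma(k\eta+1)$ emerge naturally.

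Next I would control the remainder $B^n f(t)$. Since $g(t)\le M$ and $f$ is locally integrable, a crude induction of the same flavour (or an application of the estimate above with $a$ replaced by a majorant of $f$ on $[0,t]$) shows that $B^n f(t) \le C_t\,\frac{(M\Gamma(\eta))^n\,t^{n\eta}}{\Gamma(n\eta+1)}$, which tends to $0$ as $n\to\infty$ because $\Gamma(n\eta+1)$ grows faster than any geometric sequence. Passing to the limit $n\to\infty$ in the iterated inequality then yields
\[
f(t) \le a(t)\sum_{k=0}^\infty \frac{\bigl(g(t)\,\Gamma(\eta)\,t^\eta\bigr)^k}{\Gamma(k\eta+1)} = a(t)\,E_\eta\bigl(g(t)\,\Gamma(\eta)\,t^\eta\bigr),
\]
which is the desired conclusion.

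The main obstacle I expect is the careful bookkeeping in the inductive estimate for $B^k a$: one must exploit the monotonicity of both $a$ and $g$ at every iteration to pull them outside the integral, and then recognise the resulting iterated integral as a Beta integral producing the precise Gamma-function denominator. The vanishing of the remainder $B^n f$ is then essentially automatic from the factorial-like growth of $\Gamma(n\eta+1)$, and the recognition of the series as $E_\eta$ is immediate from its definition. Since a complete proof is given in \cite[Corollary 2]{ye}, I would simply refer to that reference for the technical details.
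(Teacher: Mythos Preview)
Your proposal is correct and aligns with the paper's treatment: the paper does not prove this lemma at all but simply cites \cite[Corollary 2]{ye}, and your sketch (Picard iteration, Beta-integral identity, vanishing remainder via the factorial growth of $\Gamma(n\eta+1)$) is exactly the standard argument behind that reference. Since you also conclude by deferring to \cite[Corollary 2]{ye}, your approach and the paper's coincide.
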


\begin{remark} \label{R_issoglio19}
  In \cite{issoglio19}, the end of the proof of Proposition 4.1 incorrectly uses Gronwall's lemma. The proper argument should instead cite a generalised Gronwall's inequality, like the one stated above. 
\end{remark}

\section{Compactness and continuity in inductive spaces}\label{app:inductive}
This Appendix is devoted to the proof of a continuity result in inductive spaces. We show in two steps that a function belongs to $C_T\mathcal C^{\gamma+}$ if and only if it belongs to $C_T \mathcal C^\alpha$ for some $\alpha>\gamma$. 

The first step is about compactness of sets in inductive spaces $\mathcal C^{\gamma+}$.

\begin{lemma}\label{lm:compactenss}
Let $\gamma>0$. A set  $K\subset \mathcal C^{\gamma+}$ is a compact in $\mathcal C^{\gamma+}$ if and only if there exists  $\alpha>\gamma$ such that $K\subset \mathcal C^\alpha$ and $K$ is a compact in $\mathcal C^\alpha$.
\end{lemma}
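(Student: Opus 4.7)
The ``if'' direction is immediate: since the inclusion $\mathcal C^\alpha\hookrightarrow \mathcal C^{\gamma+}$ is continuous by construction of the inductive limit topology, compactness of $K$ in $\mathcal C^\alpha$ transfers to compactness in $\mathcal C^{\gamma+}$. For the converse, the plan is to exhaust $\mathcal C^{\gamma+}$ by a countable chain. Set $\alpha_n:=\gamma+1/n$, giving the increasing chain of Banach spaces $\mathcal C^{\alpha_1}\subset \mathcal C^{\alpha_2}\subset\dots$ (smaller H\"older exponent yields a larger space), with $\mathcal C^{\gamma+}=\bigcup_{n\ge 1}\mathcal C^{\alpha_n}$. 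Against this backdrop I would proceed in two steps: first locate a single step $\mathcal C^{\alpha_{n_0}}$ that contains $K$, then upgrade compactness from $\mathcal C^{\gamma+}$ to the norm topology of that step.

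For Step 1, argue by contradiction. If $K\not\subset \mathcal C^{\alpha_n}$ for every $n$, pick $f_n\in K\setminus \mathcal C^{\alpha_n}$. By sequential compactness of $K$ in $\mathcal C^{\gamma+}$, extract a subsequence $(f_{n_k})$ converging to some $f\in K$ in $\mathcal C^{\gamma+}$. Invoke the characterisation of convergent sequences in a countable inductive limit of Banach spaces: such a sequence must eventually lie in, and converge in the norm of, some common step $\mathcal C^{\beta}$ with $\beta>\gamma$. Choosing $m$ large enough that $\alpha_m\le \beta$ gives $\mathcal C^{\beta}\subset \mathcal C^{\alpha_m}$, hence $f_{n_k}\in \mathcal C^{\alpha_m}$; for $n_k\ge m$ the inclusion $\mathcal C^{\alpha_m}\subset \mathcal C^{\alpha_{n_k}}$ then forces $f_{n_k}\in \mathcal C^{\alpha_{n_k}}$, contradicting the choice of $f_n$. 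For Step 2, take any sequence in $K\subset \mathcal C^{\alpha_{n_0}}$ and extract a subsequence convergent in $\mathcal C^{\gamma+}$; by the same step-characterisation the convergence takes place in some $\mathcal C^{\beta}$ with $\beta>\gamma$. If $\beta\ge \alpha_{n_0}$ the continuous embedding $\mathcal C^{\beta}\hookrightarrow \mathcal C^{\alpha_{n_0}}$ gives the desired norm convergence directly; in general one combines the convergence in the weaker $\mathcal C^{\beta}$-norm with the $\mathcal C^{\alpha_{n_0}}$-bound on $K$ (which comes for free from Step 1) to recover norm convergence in $\mathcal C^{\alpha_{n_0}}$, possibly after replacing $\alpha_{n_0}$ by a slightly smaller value in $(\gamma,\alpha_{n_0})$.

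The main obstacle in both steps is the same: justifying that sequences converging in the inductive limit topology of $\mathcal C^{\gamma+}$ must eventually sit in and converge in the norm of one of the Banach steps $\mathcal C^{\alpha}$. This is classical for strict (LF)-spaces and Silva (DFS) spaces, but the present inductive limit is neither strict (the norms $\|\cdot\|_{\alpha}$ and $\|\cdot\|_{\alpha'}$ with $\alpha>\alpha'$ give genuinely different topologies on $\mathcal C^{\alpha}$) nor Silva-type, since the H\"older embeddings are not compact on $\mathbb R^d$ because of translation invariance. Establishing this step-characterisation---either by a direct argument exploiting the Littlewood--Paley description of $\mathcal C^\alpha$, or by taking it as the \emph{definition} of convergence in $\mathcal C^{\gamma+}$ and showing it coincides with the inductive locally convex topology on compact sets---is where the real work lies.
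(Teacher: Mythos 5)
Your route is genuinely different from the paper's: you argue via sequential compactness and a step-characterisation of convergent sequences in the inductive limit, whereas the paper argues via open covers, asserting that any open neighbourhood $V(x)$ taken inside a step $\mathcal C^{\alpha(x)}$ is open in $\mathcal C^{\gamma+}$ and then extracting a finite subcover. Both proofs therefore rest on a non-trivial structural property of the countable inductive limit of Banach spaces $\mathcal C^{\gamma+}=\bigcup_{\alpha>\gamma}\mathcal C^\alpha$.

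The gap in your proposal is exactly the one you flag yourself: everything hinges on the claim that a sequence converging in $\mathcal C^{\gamma+}$ must eventually lie in, and converge in the norm of, a single step $\mathcal C^\beta$ (sequential retractivity). As you correctly observe, the present limit is neither strict (the norm topologies of $\mathcal C^{\alpha}$ and $\mathcal C^{\alpha'}$ genuinely differ on $\mathcal C^\alpha$ for $\alpha>\alpha'$) nor of Silva type (the H\"older embeddings are not compact on $\mathbb R^d$), so the Dieudonn\'e--Schwartz and Silva theorems do not apply, and you have not supplied a replacement. Without it, both Step~1 and Step~2 are unsupported. Two secondary issues compound this. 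First, extracting a convergent subsequence from $K$ presupposes that $K$ is sequentially compact; compactness alone does not give this unless $K$ is metrizable in $\mathcal C^{\gamma+}$, which in a non-metrizable inductive limit of non-separable Banach spaces is itself a claim requiring proof. Second, the ``$\mathcal C^{\alpha_{n_0}}$-bound on $K$'' invoked in Step~2 does not ``come for free from Step~1'': Step~1 only yields the set-theoretic containment $K\subset\mathcal C^{\alpha_{n_0}}$, not a uniform norm bound, so the interpolation step is incomplete. For what it is worth, the paper's own ``$\Rightarrow$'' argument carries a dual unproven assertion: the inclusion $\mathcal C^{\alpha(x)}\hookrightarrow\mathcal C^{\gamma+}$ is continuous but not open --- a $\|\cdot\|_{\alpha(x)}$-ball is a bounded set, hence has empty interior, in any intermediate $\mathcal C^\beta$ with $\gamma<\beta<\alpha(x)$ --- so the paper's claim that $V(x)$ is open in $\mathcal C^{\gamma+}$ would also need justification. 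You and the paper face symmetric difficulties concentrated in the same place, namely in identifying the topology of the inductive limit with something one can control step by step.
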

\begin{proof}
``$\Rightarrow$''. 
Let $K\subset \mathcal C^{\gamma+}$ be a compact. For any $x\in K$, we know that $x\in C^{\alpha(x)}$ for some $\alpha(x)>\gamma$ and we pick  an arbitrary open neighbourhood  $V(x)$ in $\mathcal C^{\alpha(x)}$. Thus $V(x)$ is an open set of $\mathcal C^{\gamma+}$.  We have
$K \subset \cup_{x\in K} V(x)$, and since $K$ is compact in $\mathcal C^{\gamma+}$ there exists a finite subcovering $K\subset \cup_{i=1}^N V(x_i)$. Let $\alpha:= \min_{i=1, \ldots, N} \alpha (x_i)$. 
Thus $K\subset \mathcal C^\alpha$. 
Next we show that $K$ is also a compact in  $\mathcal C^\alpha$ for the chosen $\alpha$. Let $(O_\nu)_\nu$ be any open covering of $K$ in $\mathcal C^\alpha$, that is $K\subset \cup_\nu O_\nu$. Each $O_\nu$ is an open set of $\mathcal C^\alpha$ thus also of $\mathcal C^{\gamma+}$, therefore $(O_\nu)_\nu$ is also an open covering of $\mathcal C^{\gamma+}$, thus there exists a finite covering. 

``$\Leftarrow$''. 	
Let $K$ be a compact in $\mathcal C^\alpha$, for some $\alpha>\gamma$. The inclusion $K\subset \mathcal C^{\gamma+}$ is obvious. Now let us take an open covering of $K$ in  $ \mathcal C^{\gamma+}$, that is $K\subset \cup_\nu O_\nu$, where each $O_\nu$ is an open set in  $\mathcal C^{\gamma+}$. Since $K\subset \mathcal C^\alpha$, then  $K\subset \cup_\nu (O_\nu \cap \mathcal C^{\alpha})$. Finally we notice that since $O_\nu$  is an open set in $\mathcal C^{\gamma+}$, by trace topology we have that $O_\nu \cap \mathcal C^{\alpha}$ is an open set of $\mathcal C^\alpha$ (because $\mathcal C^\alpha$ is a closed set of $\mathcal C^{\gamma+}$). Thus we can extract a finite subcovering in $\mathcal C^\alpha$, which will be also a finite subcovering of $K$ in $\mathcal C^{\gamma+}$.
\end{proof}

Next we show  the continuity result. 
\begin{lemma}\label{lm:inductive}
Let $\gamma>0$. Then $C_T \mathcal C^{\gamma+} =  \cup_{\alpha>\gamma}C_T \mathcal C^{\alpha}$.
\end{lemma}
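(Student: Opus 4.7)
The easy inclusion is $\cup_{\alpha > \gamma} C_T \mathcal C^{\alpha} \subseteq C_T \mathcal C^{\gamma+}$: if $f \in C_T \mathcal C^{\alpha}$ for some $\alpha > \gamma$, then $f$ is continuous from $[0,T]$ into $\mathcal C^{\alpha}$, and since the inclusion $\mathcal C^{\alpha} \hookrightarrow \mathcal C^{\gamma+}$ is continuous (it is the canonical map into the inductive limit), composing gives $f \in C_T \mathcal C^{\gamma+}$.

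For the nontrivial inclusion, fix $f \in C_T \mathcal C^{\gamma+}$. Since $[0,T]$ is compact and $f$ is continuous into $\mathcal C^{\gamma+}$, the image $K := f([0,T])$ is compact in $\mathcal C^{\gamma+}$. By Lemma \ref{lm:compactenss}, there exists $\alpha > \gamma$ such that $K \subset \mathcal C^{\alpha}$ and $K$ is compact in $\mathcal C^{\alpha}$. In particular, the map $f$ takes values in the single Banach space $\mathcal C^{\alpha}$; it remains only to upgrade the continuity of $f$ from $\mathcal C^{\gamma+}$ to $\mathcal C^{\alpha}$.

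To do this I will use sequential continuity, which is available because $\mathcal C^{\alpha}$ is a metric space. Let $t_n \to t$ in $[0,T]$; I claim $f(t_n) \to f(t)$ in $\mathcal C^{\alpha}$. By a standard subsequence argument, it suffices to show that every subsequence of $(f(t_n))$ has a further subsequence converging to $f(t)$ in $\mathcal C^{\alpha}$. Fix any subsequence; since it lies in the compact set $K \subset \mathcal C^{\alpha}$, there is a further subsequence $(f(t_{n_k}))$ converging in $\mathcal C^{\alpha}$-norm to some $g \in K$. The continuous embedding $\mathcal C^{\alpha} \hookrightarrow \mathcal S'$ then gives $f(t_{n_k}) \to g$ in $\mathcal S'$. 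On the other hand, every test functional $\psi \mapsto \langle \psi, \varphi\rangle$ for $\varphi \in \mathcal S$ is continuous on each $\mathcal C^{\beta}$ with $\beta > \gamma$ and hence continuous on $\mathcal C^{\gamma+}$; therefore from $f(t_{n_k}) \to f(t)$ in $\mathcal C^{\gamma+}$ we also get $f(t_{n_k}) \to f(t)$ in $\mathcal S'$. Uniqueness of limits in $\mathcal S'$ yields $g = f(t)$, which completes the argument.

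The main obstacle is the last paragraph: one cannot naively deduce continuity into $\mathcal C^{\alpha}$ from continuity into the inductive space $\mathcal C^{\gamma+}$, whose topology is strictly finer than any single $\mathcal C^{\alpha}$. The compactness result from Lemma \ref{lm:compactenss} is exactly what confines the entire image of $f$ to a single $\mathcal C^{\alpha}$, and the compactness of $K$ inside $\mathcal C^{\alpha}$ is what allows the subsequence extraction that pins down the limit via $\mathcal S'$.
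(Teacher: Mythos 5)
Your proof is correct and takes essentially the same route as the paper: the easy inclusion, compactness of the image via Lemma \ref{lm:compactenss} to confine $f$ to a single $\mathcal C^\alpha$, and then a compactness-based subsequence argument to upgrade continuity. You are in fact a bit more careful than the paper's own write-up — you spell out the ``every subsequence has a further convergent subsequence'' step and identify the limit via the embedding into $\mathcal S'$, whereas the paper extracts a single convergent subsequence and invokes uniqueness of the limit in $\mathcal C^{\gamma+}$ somewhat tersely; both are sound and rest on the same ideas.
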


\begin{proof}
The inclusion  $\supseteq$ is obvious.\\ 
Next we show the inclusion  $\subseteq$. Let $f:[0,T] \to \mathcal C^{\gamma+}$ be continuous. We have to find $\alpha>\gamma$ such that $f\in C_T \mathcal C^\alpha$.
 Let $E_f:= \{ f(t), t\in [0,T] \}$, which is a compact in $\mathcal C^{\gamma+}= \cup_{\alpha>\gamma} \mathcal C^{\alpha}$ since it is the image of the compact $[0,T]$ via $f$ which is continuous. By Lemma  \ref{lm:compactenss} there exists  $\alpha>\gamma$ such that $E_f$ is a compact in $\mathcal C^\alpha$, in particular, $f:[0,T]\to \mathcal C^\alpha$. It remains to show that $f(t_n)\to f(t_0)$ in $\mathcal C^\alpha$ when $t_n \to t_0$. Since $E_f$ is compact in $\mathcal C^\alpha$, there exists a subsequence $t_{n_k}\to t_0$ such that $f(t_{n_k}) \to l$ for some $l\in \mathcal C^\alpha$, thus $l\in \mathcal C^{\gamma+}$. On the other hand, $f\in C_T \mathcal C^{\gamma+}$ means that $f(t_n)\to f(t_0)$ in $\mathcal C^{\gamma+}$. Thus by uniqueness of the limit we have $l=f(t_0)$. 
\end{proof}

\begin{remark}\label{rm:inductive}
By similar arguments as  in the proofs of Lemma \ref{lm:compactenss} and Lemma \ref{lm:inductive} we obtain the same characterization for any inductive space  of the form $E= \cup_{N\in \mathbb N} E_N$, where $E_N$ is a Banach space, that is 
\begin{itemize}
\item[(i)]  $K\subset E$ is a compact in $E$ if and only if there exists $N$ such that $K\subset E_N$ and $K$ is a compact in $E_N$;
\item[(ii)]  $C_T E =  \cup_{N\in \N} C_T E_{N}.$
\end{itemize}
\end{remark}

\bibliographystyle{plain}
\bibliography{../../../../BIBLIO_FILE/biblio}
\end{document}